\documentclass{ourlematema}
\usepackage{graphicx} \usepackage{amsmath,amsfonts,amssymb,amsthm}
\usepackage{hyperref}
\hypersetup{
pdftitle={Cyclic polytopes through the lens of iterated integrals},
pdfsubject={The volume of a cyclic polytope can be obtained by forming an iterated integral along a suitable piecewise linear path running through its edges. Different choices of such a path are related by the action of a subgroup of the combinatorial automorphisms of the polytope. Motivated by this observation, we look for other linear combinations of iterated integrals that are invariant under the subgroup action. This yields interesting polynomial attributes of the cyclic polytope. We prove that there are infinitely many of these invariants which are algebraically independent in the shuffle algebra.},
pdfauthor={Felix Lotter, Rosa Preiß},
pdfkeywords={piecewise linear paths; shuffle algebra; permutation group action; signed volume; invariants; positive matrices}
}
\usepackage{tikz-cd}
\usepackage{cleveref}
\usepackage{enumerate}
\usepackage{xcolor}
\usepackage{shuffle}

\newtheorem{theorem}{Theorem}[section]
\newtheorem*{theorem*}{Theorem}
\newtheorem{corollary}[theorem]{Corollary}
\newtheorem{remark}[theorem]{Remark}
\newtheorem{lmm}[theorem]{Lemma}
\newtheorem{conjecture}[theorem]{Conjecture}

\theoremstyle{definition}
\newtheorem{definition}[theorem]{Definition}
\newtheorem{definitionthm}[theorem]{Theorem and Definition}

\newtheorem{proposition}[theorem]{Proposition}
\newtheorem{example}[theorem]{Example}
\newtheorem{myproblem}[theorem]{Problem}
\Crefname{thm}{Theorem}{Theorems}
\newcommand{\word}[1]{\texttt{#1}}
\definecolor{darkgreen}{RGB}{0,150,20}

\newcommand{\annotationR}[1]{\textcolor{purple}{#1}}
\newcommand{\mb}{\mathbb}
\newcommand{\mc}{\mathcal}
\newcommand{\R}{{\mb R}}
\newcommand{\convperms}{C}
\newcommand{\spann}{\operatorname{span}}
\newcommand{\ourfeatures}{\mathsf{Inv}}
\newcommand{\pwlinear}{\mathsf{PL}}
\newcommand{\signedvolume}{\mathsf{vol}}
\newcommand{\antipode}{\mathcal{A}}
\newcommand{\timerevinv}{\mathsf{TimeRevInv}}
\newcommand{\loopclosureinv}{\mathsf{LoopClosureInv}}
\newcommand{\convhull}{\operatorname{conv}}
\newcommand{\homomor}{H}
\newcommand{\letteri}{\texttt{i}}
\newcommand{\emptyword}{\texttt{e}}

\DeclareMathOperator{\Aut}{Aut}
\DeclareMathOperator{\sgn}{sgn}

\title{Cyclic polytopes through the lens of iterated integrals}
\author{Felix Lotter}
\address{Max Planck Institute for Mathematics in the Sciences, Leipzig\\ e-mail: \texttt{felix.lotter@mis.mpg.de}}
\author{Rosa Preiss}
\address{Technical University Berlin\\ e-mail: \texttt{preiss@math.tu-berlin.de}}
\date{10/15/24}

\begin{document}
\maketitle
\begin{abstract}
The volume of a cyclic polytope can be obtained by forming an iterated integral along a suitable piecewise linear path running through its edges. Different choices of such a path are related by the action of a subgroup of the combinatorial automorphisms of the polytope. Motivated by this observation, we look for other linear combinations of iterated integrals that are invariant under the subgroup action. This yields interesting polynomial attributes of the cyclic polytope. We prove that there are infinitely many of these invariants which are algebraically independent in the shuffle algebra.
\\[2ex]
\emph{Journal reference}: Special volume on Positive Geometry, Le Matematiche 80 (1) (2025), 365-385\\\emph{AMS 2010 Subject Classification:} 60L10, 13A50, 52B05 \\\emph{Keywords:} piecewise linear paths, shuffle algebra, permutation group \mbox{action}, signed volume, invariants, positive matrices
\end{abstract}

\section{Introduction}
\paragraph{Iterated integrals and piecewise linear paths}
A \emph{path}, for the purpose of this paper, is a continuous map $X:[0,1]\to \mb R^d$ such that the coordinate functions $X_i$ are piecewise continuously differentiable. Given such a path $X$, its \emph{(iterated integral) signature} is the linear form
\begin{align}\label{def:sig}
    S(X): \mb R\langle \texttt{1},\dots, \texttt{d}\rangle  \to \mb R, \quad\letteri_1\cdots \letteri_k &\mapsto \int_{\Delta_k} dX_{i_1}(t_1)\dots dX_{i_k}(t_k)
\end{align}
where $k$ varies over all positive integers and $\Delta_k$ denotes the simplex $0\leq t_1 \leq \dots \leq t_k \leq 1$. Here, $\mb R\langle \texttt{1},\dots, \texttt{d}\rangle$ is the free associative algebra over the letters (that is, formal symbols) $\mathtt{1},\dots,\mathtt{d}$. The words $\letteri_1\dots\letteri_k$ form a basis of this space, such that \eqref{def:sig} does indeed define a linear form. For example, $S(X)(\texttt{1} \texttt{1} + \texttt{1} \texttt{2}) = \int_0^1 \int_0^{t_2} X_1'(t_1) X_1'(t_2) + X_1'(t_1) X_2'(t_2) dt_1 dt_2$.

The signature of a path determines the path up to translation, reparametrization and tree-like equivalence \cite{BGLY16,Chen1958}.\par
In this paper, we are interested in \emph{piecewise linear paths}. Such a path is uniquely determined by its control points $x_1,\dots,x_n \in \mb R^d$, that is, the (ordered) set of start and end points of all of its linear segments.
The signature of such a piecewise linear path can be described explicitly in terms of the increments $a_i := x_i - x_{i-1}$.
\pagebreak

\noindent In fact, it defines a map
\begin{equation}\label{eq:sign map}
   \homomor^d_n: \mb R\langle \texttt 1,\dots,\texttt d \rangle \to \mb R[x_1,\dots,x_n],
\end{equation}
where $x_i=(x_{i1},\dots,x_{id})$,
see \Cref{def:Hnd_via_sig} and \Cref{eq:Hnd_recursive1} for a recursive formula.

If the left-hand side is viewed as a commutative algebra $\mb R\langle \texttt 1,\dots,\texttt d \rangle_\shuffle$ via the \textit{shuffle product} $\shuffle$ (see \eqref{def:shuffle}, and e.g.\ \cite{colmenarejopreiss20} for an introduction based on the recursive definition), this map becomes a homomorphism of graded algebras. Its image is a subalgebra of $\mb R[x_1,\dots,x_n]$ which we will call \emph{the ring of signature polynomials in $d \times n$ variables} in the following, denoted by $\mc S^d[x_1,\dots,x_n]$.\par
The polynomials in $\mc S^d[x_1,\dots,x_n]$ inherit some nice properties from their integral representation. For example, they are translation invariant,
\begin{equation*}
p(x_1,\dots,x_n)=p(x_1+y,\dots,x_n+y)
\end{equation*}
for $y \in \mb R^d$. Moreover, if $p \in \mc S^d[x_1,\dots,x_n]$,
then for $1 < i < n$ the polynomial
\begin{equation}\label{eq:face map}
p(x_1,\dots,x_{i-1},\lambda x_{i-1} + (1-\lambda) x_{i+1},x_{i+1},\dots,x_n)
\end{equation}
is independent of $\lambda \in [0,1]$, due to reparame\-trization invariance of iterated integrals. In particular, \eqref{eq:face map} is a polynomial in variables $x_j, j\not=i$.
\par
It follows that for an injective map $I \to J$ of finite totally ordered sets, there is a natural ``restriction map" $S^d[x_J] \to \mc S^d[x_I]$ (where $x_I = \{x_i \ | \ i \in I \}$), given by the map $\mc S^d[x_J] \to \mc S^d[x_I]$ induced by replacing for $j \in J \backslash I$ the variable $x_j$ by $x_{i}$ where $i$ is the largest element in $I$ smaller than $j$ or, by the above equivalently, the smallest element in $I$ larger than $j$. \par
Now, for fixed $n$ and a given subgroup $G$ of $S_n$ one can ask the following question: Which signature polynomials in $d\times n$ variables are invariant under the action of $G$ on $x_1,\dots,x_n$ by permutation? More precisely, we would like to determine the pullback $\mathsf{Inv}^d_n(G) \subseteq \mb R\langle \texttt 1,\dots,\texttt d \rangle$ in
\[\begin{tikzcd}
    \mathsf{Inv}^d_n(G) \rar \dar & \mb R[x_1,\dots,x_n]^G \dar \\
    \mb R\langle \texttt 1,\dots,\texttt d \rangle \rar["H^d_n"] & \mb R[x_1,\dots,x_n]
\end{tikzcd}\]
where $\mb R[x_1,\dots,x_n]^G$ is the subring of $G$-invariants in $\mb R[x_1,\dots,x_n]$.

\paragraph{Towards positivity}

In this paper, we address this question for a specific choice of $G$. Namely, given $d$ and $n\geq d+1$, $S_n$ acts naturally by permutations of columns on the set of $(d+1) \times n$-matrices
\begin{equation}\label{eq:matr}
  \begin{pmatrix}
  1 & \dots & 1\\
   x_1 & \dots & x_n
  \end{pmatrix}
 \end{equation}
 and we want to choose $G$ as the stabiliser $\convperms^d_n$ of the subset of matrices whose maximal minors are positive. Following the terminology of \cite[Section 2]{arkani2014amplituhedron}, we call these matrices \textit{positive}.\par
 Our motivation is that for each such positive matrix, the volume of the polytope $\convhull(x_1,\dots,x_n)$ can be obtained from the signature of the piecewise linear path $X$ with control points $x_1 \to \dots \to x_n$ as the \textit{signed volume}
 $$\langle S(X), \tfrac{1}{d!}\,\mathsf{vol}_d \rangle = \frac{1}{d!}\int_{\Delta_d} \det \begin{pmatrix}
     X'(t_1) & \dots & X'(t_d)
 \end{pmatrix} dt_1 \ldots dt_d$$ where
 \begin{equation}\label{eq:signed vol}
    \signedvolume_d := \sum_{\sigma \in S_d} \sgn(\sigma) \sigma(\texttt 1)\dots \sigma(\texttt d) \in \mb R\langle \texttt{1},\dots, \texttt{d}\rangle,
\end{equation}
see \cite[Section~3.3]{diehl2019invariants}. In fact, one can deduce more generally that the volume of the convex hull of a so-called \textit{convex} path agrees with its signed volume, cf. \cite[Theorem 3.4]{amendolaleemeroni23}. Convex piecewise linear paths correspond precisely to matrices with nonnegative maximal minors \cite[Section~2]{ANudelman_1975}. See the references in \cite{amendolaleemeroni23} for more context on convex paths. 

As the set of $x_1, \dots, x_n$ with \eqref{eq:matr} positive is Zariski-dense in the set of all $x_1, \dots, x_n$, it follows that $\signedvolume_d \in \mathsf{Inv}^d_n(\convperms^d_n)$ for all $n\geq d+1$ (see \Cref{prop:eq rel on pwl}).  Thus, we expect $\mathsf{Inv}^d_n(\convperms^d_n)$ to describe geometric features of polytopes of the form $\convhull(x_1,\dots,x_n)$ for positive matrices \eqref{eq:matr}. 
Such polytopes are known as cyclic $d$-polytopes admitting a \textit{canonical labeling} $x_1,\dots,x_n$ (note however that this is not necessarily unique).
\begin{figure}[h]
    \centering
    \includegraphics[width=0.8\linewidth]{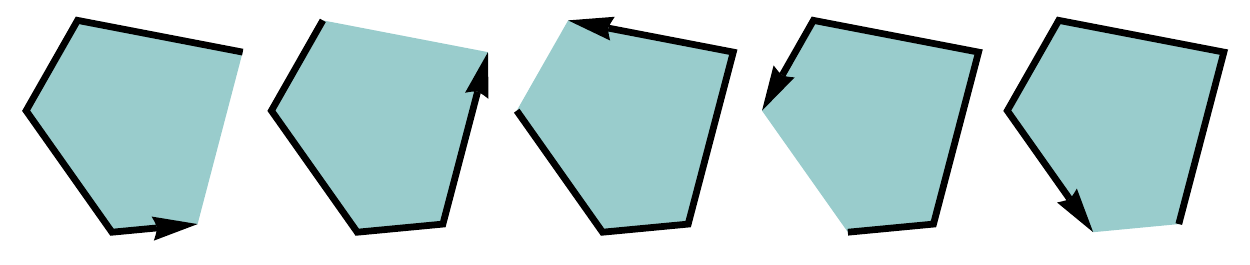}
    \caption{A cyclic $2$-polytope with $5$ vertices, spanned by $5$ different piecewise linear paths, related by cyclic permutations of their control points. The volume of the polygon agrees with the signed volume of each of the paths.}\label{fig:2polytope}
\end{figure}
\par
Of particular interest is the intersection
$$\mathsf{Inv}^d := \bigcap_{n\geq d+1} \mathsf{Inv}^d_n(\convperms^d_n)$$
which we call the \textit{ring of volume invariants}. This terminology will be motivated in \Cref{prop:eq rel on pwl} where we show that if the signed volume of a piecewise linear path in general position is invariant under a permutation of the control points then its signature at any $w \in \ourfeatures^d$ will be as well.\par
Note that $\ourfeatures^d$ forms a subalgebra of $\mb R\langle \texttt{1},\dots, \texttt{d}\rangle_\shuffle$. One might view $H_n^d(\ourfeatures^d)$ as functions on the set of canonically labeled cyclic $d$-polytopes with $n$ vertices. In contrast to the discussion in \cite[Section~6]{fedorchukpak}, these features are not necessarily SO-invariants. Note that by definition of $\ourfeatures^d$, $\homomor^d_{\bullet}(\ourfeatures^d)$ is compatible with the restriction maps of rings of signature polynomials. We can interpret them as restrictions to subpolytopes.\par
As noted above, we certainly have $\signedvolume_d \in \mathsf{Inv}^d$. The main result of this paper is the following theorem in \Cref{sec:ring_of_invariants}, showing that there is an abundance of volume invariants:

\begin{theorem*}[{\ref{thm:abundant invariants}}]
    For any $d$, $\ourfeatures^d$ contains infinitely many algebraically independent elements (with respect to the shuffle product) and is thus in particular infinitely generated as a (shuffle) subalgebra of $\R\langle\word{1},\dots,\word{d}\rangle$.
\end{theorem*}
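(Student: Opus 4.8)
The plan is to first make ``algebraic independence in the shuffle algebra'' concrete, then to reduce membership in $\ourfeatures^d$ to a single, $n$-independent invariance condition, and finally to exhibit infinitely many independent elements satisfying it. Throughout I take $d\ge 2$ (for $d=1$ the alphabet has a single letter and $\R\langle\word{1}\rangle_\shuffle\cong\R[\word{1}]$ has transcendence degree one, so nothing is to prove). By Radford's theorem the shuffle algebra $\R\langle\word{1},\dots,\word{d}\rangle_\shuffle$ is a free polynomial algebra on the Lyndon words; in particular it is a graded polynomial ring, so if $\mathfrak m$ denotes its augmentation ideal then elements $w_1,w_2,\dots\in\mathfrak m$ are algebraically independent as soon as their images in the cotangent space $\mathfrak m/\mathfrak m^2$ (the space of indecomposables, i.e.\ the free Lie coalgebra dual to the free Lie algebra) are $\R$-linearly independent. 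This replaces the transcendence question by a purely linear one about ``leading Lie parts,'' which is far easier to control.

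Next I would pin down what $\ourfeatures^d=\bigcap_{n\ge d+1}\ourfeatures^d_n(\convperms^d_n)$ actually is. Using \Cref{prop:eq rel on pwl}, a word $w$ lies in $\ourfeatures^d$ precisely when $\homomor^d_n(w)$ is fixed by every control-point permutation preserving the signed volume of a generic path, and this for all $n$ at once. The relevant symmetries are a cyclic rotation of the control points and the applicable reflection, and their effect on the signature is governed by two $n$-independent operations: after closing the path into a loop, a rotation acts on the loop signature by conjugation by a single segment, while a reflection acts through the antipode $\antipode$. Consequently the intersection over all $n$ should stabilise, placing $\ourfeatures^d$ inside the \emph{loop-closure invariants} $\loopclosureinv$ — the conjugation-invariant (``cyclic'' or trace) shuffle functionals — possibly further cut down to an $\antipode$-eigenspace by a $\timerevinv$-type condition when reversals preserve orientation. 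The key sanity check is that $\signedvolume_d$ is visibly of this kind: the top signed volume of a closed loop does not depend on which vertex is taken first, exactly as in \Cref{fig:2polytope}. This already exhibits $\signedvolume_d$ as one member of a rich, $n$-independent subalgebra.

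It then remains to show that this subalgebra has infinite transcendence degree, which by the first paragraph reduces to producing invariants whose indecomposable parts span an infinite-dimensional subspace of $\mathfrak m/\mathfrak m^2$. Here I would use that the conjugation-invariant functionals project onto the \emph{cyclic words} (necklaces) in $\word{1},\dots,\word{d}$, of which, for $d\ge 2$, there are linearly independent representatives in arbitrarily high degree. Concretely, for infinitely many degrees $k$ I would take the cyclic symmetrisation (and, if the $\timerevinv$-condition is present, also the reversal symmetrisation) of a suitable word of length $k$, and verify that (a) it lies in $\ourfeatures^d$, and (b) its image in $\mathfrak m/\mathfrak m^2$ is a nonzero necklace not occurring among those of lower degree. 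Running this over an infinite set of degrees yields linearly independent indecomposables and hence, by the cotangent-space criterion, infinitely many algebraically independent elements of $\ourfeatures^d$; infinite generation of the subalgebra is then automatic, since a finitely generated subalgebra has finite transcendence degree.

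The main obstacle I anticipate is the interface between the two gradings. The rotation/loop-closure operation is not homogeneous in the naive sense: closing the path replaces the final increment by $-(A_2+\dots+A_n)$, so it mixes a word with lower-order ``correction'' terms, and one must check that these corrections neither spoil the invariance at every finite $n$ nor kill the chosen necklace in $\mathfrak m/\mathfrak m^2$. Equivalently, the delicate point is proving that the finite-$n$ invariance conditions genuinely stabilise to the clean conjugation-invariance condition — so that the intersection $\bigcap_n$ is not strictly smaller than $\loopclosureinv$ — while simultaneously keeping the indecomposable parts of the constructed family under control. Once this stabilisation is established, the remaining ingredients (Radford's structure theorem, the cotangent-space criterion, and the count of necklaces for $d\ge 2$) are standard.
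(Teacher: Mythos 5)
There is a genuine gap, and it sits exactly at the two places you label (a) and (b). First, your characterization of $\ourfeatures^d$ is wrong: the reduction to ``cyclic rotation plus the applicable reflection'' is valid only for $n\geq d+3$ (this stabilisation is what \Cref{prop:inv geq3 odd} and \Cref{prop:inv_loopclosure_timerev} prove, so the obstacle you anticipate in your last paragraph is not the real one). For $n=d+1$ and $n=d+2$ the groups are much larger: $\convperms^d_{d+1}=A_{d+1}$ and $\convperms^d_{d+2}$ is of comparable size (\Cref{prop:convex autos odd}, \Cref{prop:convex autos even}), and these invariance conditions are part of the definition $\ourfeatures^d=\ourfeatures^d_{d+1}\cap\ourfeatures^d_{d+2}\cap\ourfeatures^d_{\geq d+3}$; they do not stabilise away. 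In fact \Cref{conj:only vol for d+2} expresses the expectation that, modulo $\mc I(\pwlinear^d_{d+2})$, the only elements surviving the $n=d+2$ condition are shuffle polynomials in $\signedvolume_d$ — so a cyclic/reversal symmetrisation of a word will essentially never pass your check (a), and your proposal contains no mechanism to force invariance under $A_{d+1}$ or $\convperms^d_{d+2}$. The paper's proof hinges on an idea absent from your plan: it constructs elements of $\ourfeatures^d_{\geq d+3}\cap\mc I(\pwlinear^d_{d+2})$. Such elements vanish identically on all paths with at most $d+2$ control points, hence are \emph{trivially} invariant for the two problematic values of $n$, which gives membership in $\ourfeatures^d$ for free. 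That this intersection still has infinite transcendence degree is then a counting argument: $\R\langle\word{1},\dots,\word{d}\rangle_\shuffle/\mc I(\pwlinear^d_{d+2})$ embeds into $\mb R[x_1,\dots,x_{d+2}]$, a ring of finite transcendence degree, so the kernel of the quotient map restricted to a polynomial subring $\mb R[s_1,s_2,\dots]\subseteq\ourfeatures^d_{\geq d+3}$ must retain infinitely many algebraically independent elements.

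Second, even granting loop-closure invariance, your step (b) fails for every candidate you propose: the cyclic symmetrisation $c(w)$ of \emph{any} word of length at least $2$ is decomposable in the shuffle algebra, so its image in $\mathfrak m/\mathfrak m^2$ is always zero and the cotangent-space criterion yields nothing. Indeed, $\langle c(w),\cdot\rangle$ is a trace-like functional for the concatenation product, $\langle c(w),u\bullet v\rangle=\langle c(w),v\bullet u\rangle$, hence it annihilates all Lie elements of degree $\geq 2$; by Ree's theorem the orthogonal complement of the Lie elements in each degree is exactly the space of proper shuffles, so $c(w)\in\mathfrak m^{\shuffle 2}$. (Low-degree instances: $\word{12}+\word{21}=\word{1}\shuffle\word{2}$ and $\word{112}+\word{121}+\word{211}=\tfrac12\,\word{1}\shuffle\word{1}\shuffle\word{2}$.) The paper's route avoids necklaces entirely: algebraic independence inside $\timerevinv^d$ and $\loopclosureinv^d\cap\timerevinv^d$ is obtained by a squaring trick — if $a_1,a_2,\dots$ are algebraically independent elements of the image of $w\mapsto w-\antipode w$, then $a_1^{\shuffle 2},a_2^{\shuffle 2},\dots$ are algebraically independent and antipode-invariant — combined with the nontrivial external input from \cite{loopcl24} that $\loopclosureinv^d$ contains an infinite algebraically independent set. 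Your identification of $\loopclosureinv^d$ with conjugation-invariant ``necklace'' functionals is likewise asserted rather than proved, and the loop-closure operation's inhomogeneity (which you do flag) is precisely why the paper treats $\loopclosureinv^d$ as a black box via \cite{loopcl24} instead.
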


\paragraph{Outline}
In Section \ref{sec:Hnd}, we precisely define and thoroughly discuss the ring homomorphism $H_n^d$.
In particular, we describe how to obtain a useful recursive formula in \eqref{eq:Hnd_recursive1}.

Section \ref{sec:Cnd} is devoted to introducing the subgroup $C_n^d\subset S_n$ as the stabilizer of positive matrices, i.e.\ matrices with positive maximal minors, under column permutation. We then show that $C_n^d$ is exactly the subgroup of $S_n$ under which the signed volume is invariant for piecewise linear paths in general position.

Finally, in Section \ref{sec:ring_of_invariants}, we prove our main results. 
In Propositions \ref{prop:inv geq3 odd} and \ref{prop:inv_loopclosure_timerev}, we fully characterize the invariant rings $\ourfeatures_{\geq d+3}^d$ for $d+3$ and more points in $\mb R^d$. 
In Theorem \ref{thm:abundant invariants}, we show that the rings of volume invariants $\ourfeatures^d$ are `very large', 
in the sense that they are infinitely generated, and even contain infinitely many algebraically independent elements.

\section{Piecewise linear paths and signatures}\label{sec:Hnd}

Our goal in this section is to define and explain the homomorphism $H_n^d$.\par
\begin{definition}
    A piecewise linear path with control points $x_1,\dots,x_n \in \mb R^d$ is a continuous map $X: [0,1] \to \mb R^d$ such that there are $0=t_1\leq t_2 \leq \dots \leq t_n=1$ with the property that $X(t_i)=x_i$ for all $i$ and $X$ is an affine map on all intervals $[t_i,t_{i+1}]$. 
    We write $\{x_1\to \dots \to x_n\}$ to denote such a path independent of the precise time parametrization,
    and we write $\pwlinear^d_n$ for the set of all piecewise linear paths through $\mb R^d$ with $n$ control points.
\end{definition}

In particular, a piecewise linear path with two control points is a linear path. Up to reparametrization, any piecewise linear path can be viewed as a \textit{concatenation} of linear paths.
\begin{definition}
    Given paths $X:[0,1]\to \mb R^d$ and $Y:[0,1] \to \mb R^d$, their concatenation is the path $X \sqcup Y:[0,1] \to \mb R^d$ which is defined as $X(2t)$ for $t\in[0,\frac 1 2]$ and as $Y(2t - 1)$ for $t\in [\frac 1 2, 1]$.
\end{definition}

An important property of the signature is its compatibility with concatenation, in the following sense:

\begin{proposition}[Chen's identity, Theorem~3.1 of \cite{bib:Che1954}]\label{prop:chen}
    Let $X$ and $Y$ be paths in $\mb R^d$. Then 
    $$S(X \sqcup Y) = S(X) \bullet S(Y).$$
\end{proposition}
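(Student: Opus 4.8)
The plan is to verify the stated identity word by word. Recall that the product $\bullet$ on linear forms on $\mb R\langle \texttt{1},\dots,\texttt{d}\rangle$ is the one dual to deconcatenation: for linear forms $\phi,\psi$ and a word $\letteri_1\cdots\letteri_k$,
\[
(\phi\bullet\psi)(\letteri_1\cdots\letteri_k)=\sum_{j=0}^{k}\phi(\letteri_1\cdots\letteri_j)\,\psi(\letteri_{j+1}\cdots\letteri_k),
\]
where we adopt the standard convention that every signature sends the empty word $\emptyword$ to $1$ (so the $j=0$ and $j=k$ terms recover $\psi$ and $\phi$ on the full word). Since both sides are linear forms, it suffices to prove for each word that
\[
S(X\sqcup Y)(\letteri_1\cdots\letteri_k)=\sum_{j=0}^{k}S(X)(\letteri_1\cdots\letteri_j)\,S(Y)(\letteri_{j+1}\cdots\letteri_k).
\]

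First I would fix the parametrization of the concatenation in which $X$ is traversed on $[0,\tfrac12]$ and $Y$ on $[\tfrac12,1]$, so that the increment measures satisfy $d(X\sqcup Y)=dX$ on the first half and $d(X\sqcup Y)=dY$ on the second (after the rescalings below). Unfolding the left-hand side as an integral over $\Delta_k=\{0\le t_1\le\dots\le t_k\le 1\}$ and using the monotonicity of the $t_i$, I would decompose $\Delta_k$ into the $k+1$ pieces
\[
\Delta_{k}^{(j)}=\{0\le t_1\le\dots\le t_j\le\tfrac12\le t_{j+1}\le\dots\le t_k\le 1\},\qquad j=0,\dots,k,
\]
which cover $\Delta_k$ and overlap only where some $t_i=\tfrac12$, a set of measure zero. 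The key point is that because the times are ordered, the indices with $t_i\le\tfrac12$ must form an initial segment, so the cut index $j$ is well defined on each piece; there the first $j$ differentials come from $X$ and the last $k-j$ from $Y$.

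On each piece $\Delta_{k}^{(j)}$ the integration domain is a product of the simplex $\{0\le t_1\le\dots\le t_j\le\tfrac12\}$ in the $X$-variables with the simplex $\{\tfrac12\le t_{j+1}\le\dots\le t_k\le 1\}$ in the $Y$-variables, and the integrand splits accordingly, so Fubini factors the integral as a product. Rescaling the first domain by $t\mapsto 2t$ and the second by $t\mapsto 2t-1$ identifies these two integrals with $S(X)(\letteri_1\cdots\letteri_j)$ and $S(Y)(\letteri_{j+1}\cdots\letteri_k)$ respectively: the chain-rule factor $2$ in each rescaled differential is exactly cancelled by the Jacobian of the substitution, which is precisely the reparametrization invariance of iterated integrals noted in the introduction. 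Summing over $j$ yields the claimed formula.

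The only genuine subtlety—and the step I would treat most carefully—is the behaviour at the junction time $\tfrac12$, where $X\sqcup Y$ fails to be differentiable. One must check that the pieces $\Delta_{k}^{(j)}$ really do partition $\Delta_k$ up to a null set and that the integrand remains integrable across $t_i=\tfrac12$. Both follow from additivity of the Lebesgue integral applied to the splitting $\{t_i\le\tfrac12\}\cup\{t_i\ge\tfrac12\}$ in each variable, together with the fact that the finitely many non-differentiability times of a piecewise $C^1$ path form a null set and contribute nothing. Everything else is a routine application of Fubini and the change-of-variables formula.
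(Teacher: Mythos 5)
Your proof is correct, but note that the paper itself offers no proof of this proposition: it is quoted directly as Theorem~3.1 of Chen's 1954 paper, with the product $\bullet$ unpacked afterwards but no argument supplied. Your simplex-splitting argument --- decomposing $\Delta_k$ along the junction time $\tfrac12$ into the pieces $\Delta_k^{(j)}$, observing that the ordering $t_1\le\dots\le t_k$ forces the $X$-times to form an initial segment so the cut index is well defined, factoring each piece as a product of two simplices via Fubini, and cancelling the chain-rule factors of $2$ against the Jacobians of $t\mapsto 2t$ and $t\mapsto 2t-1$ --- is essentially Chen's original argument and the standard textbook proof, and your write-up is complete: the pieces cover $\Delta_k$ up to the null set where some $t_i=\tfrac12$, the boundary terms $j=0$ and $j=k$ are handled by the convention $S(\cdot)(\emptyword)=1$, which matches the paper's explicit formula $\sum_{j=0}^k \langle S(X),\letteri_1\cdots\letteri_j\rangle\langle S(Y),\letteri_{j+1}\cdots\letteri_k\rangle$, and piecewise-$C^1$ regularity renders the finitely many non-differentiability times harmless exactly as you say. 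What your version buys is self-containedness at the regularity level the paper actually works with (piecewise linear, hence piecewise $C^1$, paths); what the paper's citation buys is brevity and generality, since Chen's identity holds for far rougher paths, where one argues by approximation rather than through pointwise derivatives as your proof does.
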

\noindent Here $S(X) \bullet S(Y)$ is the composition
    \[\begin{tikzcd}[nodes={inner sep=10pt}]
        \mb R\langle \mathtt{1}, \dots, \mathtt{d} \rangle \rar{\Delta} &  \mb R\langle \mathtt{1}, \dots, \mathtt{d} \rangle \otimes \mb R\langle \mathtt{1}, \dots, \mathtt{d} \rangle \rar{S(X) \otimes S(Y)}  & \mb R
    \end{tikzcd}\]
    where $\Delta$ denotes the coproduct of the Hopf algebra $\mb R\langle \mathtt{1}, \dots, \mathtt{d} \rangle$. More explicitly, $S(X)\bullet S(Y)$ maps a word $\letteri_1\cdots \letteri_k$ to
    $$\sum_{j=0}^k \langle S(X), \letteri_1\cdots \letteri_j \rangle \langle S(Y), \letteri_{j+1} \cdots \letteri_k \rangle.$$

We are now ready to define $H_n^d$.
\begin{definitionthm}\label{def:Hnd_via_sig}
    For any $w\in\R\langle \texttt{1},\dots,\texttt{d}\rangle$ the function
    \begin{align*}
        f_n(w): \R^{d \times n} &\to\R,\quad (x_1,\ldots,x_n) \mapsto \langle S(\{x_1\to\dots\to x_n\}),w\rangle
    \end{align*}
    is given by a polynomial in $x_1, \ldots, x_n$. We define
    \begin{align*}
    H_n^d: \mb R\langle\texttt{1},\dots,\texttt{d}\rangle \to \mb R[x_1,\dots,x_n],\quad w &\mapsto f_n(w)
    \end{align*}
\end{definitionthm}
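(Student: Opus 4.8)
The plan is to prove polynomiality by induction on the number of control points $n$, peeling off the last linear segment with Chen's identity (\Cref{prop:chen}). This has the added benefit of producing, along the way, exactly the recursive formula \eqref{eq:Hnd_recursive1} referenced in the introduction.

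First I would settle the base case of a single linear segment, the path $\{x_{i-1}\to x_i\}$ with increment $a:=x_i-x_{i-1}$. Choosing the parametrization $t\mapsto x_{i-1}+ta$, so that $X'(t)\equiv a$, the defining integral \eqref{def:sig} on a word $\letteri_1\cdots\letteri_k$ collapses because the integrand is the constant $a_{i_1}\cdots a_{i_k}$:
\[
\langle S(\{x_{i-1}\to x_i\}),\,\letteri_1\cdots\letteri_k\rangle \;=\; a_{i_1}\cdots a_{i_k}\int_{\Delta_k} dt_1\cdots dt_k \;=\; \tfrac{1}{k!}\,a_{i_1}\cdots a_{i_k},
\]
a single monomial in the entries of $a=x_i-x_{i-1}$, hence a polynomial in $x_{i-1},x_i$; on the empty word the value is $1$. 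Reparametrization invariance of iterated integrals (already invoked in the introduction) guarantees that this value does not depend on the chosen time parametrization, so the formula is well defined for the path $\{x_{i-1}\to x_i\}$.

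Next, for the inductive step I would write the path, up to reparametrization, as the concatenation $\{x_1\to\dots\to x_n\}=\{x_1\to\dots\to x_{n-1}\}\sqcup\{x_{n-1}\to x_n\}$. Applying Chen's identity together with the explicit deconcatenation formula for $\bullet$ recorded in \Cref{prop:chen}, evaluation on a word $w=\letteri_1\cdots\letteri_k$ becomes
\[
f_n(w)(x_1,\dots,x_n)=\sum_{j=0}^{k} f_{n-1}(\letteri_1\cdots\letteri_j)(x_1,\dots,x_{n-1})\cdot \tfrac{1}{(k-j)!}\,(x_n-x_{n-1})_{i_{j+1}}\cdots(x_n-x_{n-1})_{i_k}.
\]
By the induction hypothesis every factor $f_{n-1}(\letteri_1\cdots\letteri_j)$ is a polynomial in $x_1,\dots,x_{n-1}$, and by the base case the second factor is a polynomial in $x_{n-1},x_n$. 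A finite sum of products of polynomials is again a polynomial, so $f_n(w)$ is a polynomial in $x_1,\dots,x_n$; this display is precisely the recursion \eqref{eq:Hnd_recursive1}. Since $w\mapsto\langle S(X),w\rangle$ is linear, the assignment $w\mapsto f_n(w)$ is linear, so $H_n^d$ is a well-defined linear map into $\mb R[x_1,\dots,x_n]$.

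The argument is short and I do not anticipate a serious obstacle; the only points demanding care are the bookkeeping of the degenerate summands $j=0$ and $j=k$ in the deconcatenation sum, where one subword is empty and the corresponding factor equals $1$, and confirming that the linear-segment formula is genuinely independent of the time parametrization, which is exactly the reparametrization invariance noted above.
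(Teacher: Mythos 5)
Your proof is correct and takes essentially the same approach as the paper: induction on $n$ with the single linear segment (value $\tfrac{1}{k!}a_{i_1}\cdots a_{i_k}$) as base case and Chen's identity to split off one segment, after noting well-definedness via reparametrization invariance. The one cosmetic discrepancy is that the paper peels off the \emph{first} segment (taking $l=2$ in the general Chen splitting), so its recursion \eqref{eq:Hnd_recursive1} assigns the initial $j$ letters to the increment $x_2-x_1$; your display, which peels off the \emph{last} segment, is the mirror image of \eqref{eq:Hnd_recursive1} rather than literally that formula, though it establishes polynomiality just as well.
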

\begin{proof}
    First note that $f_n(w)$ is well-defined by reparametrization invariance of the signature $S$.
    We will now proceed by induction, starting with $n=2$.\par
    The signature of a linear path $X$ with control points $x_1,x_2$ is easily calculated. Indeed, note that the integrals \eqref{def:sig} only depend on the vector $a:=x_2-x_1$. The integrand of $\langle S(X), \letteri_1\cdots \letteri_k\rangle$ is just the product $a_{i_1}\ldots a_{i_k}$ and thus 
\begin{equation*}\label{eq:H2}
    f_2({\letteri_1\cdots \letteri_k}) = \frac{1}{k!} a_{i_1}\ldots a_{i_k}
\end{equation*}
as the simplex $\Delta_k$ has volume $\frac{1}{k!}$,
proving the base case. Now by \Cref{prop:chen} we have
\begin{align*}
    &f_n(\letteri_1\cdots \letteri_k)(x_1,\dots,x_n) = \\&\sum_{j=0}^k f_{l}(\letteri_1\cdots \letteri_j)(x_1,\dots,x_{l}) \cdot f_{n-l+1}(\letteri_{j+1} \cdots \letteri_k)(x_{l},\dots,x_{n})
\end{align*}
for any $n$ and all $l$. Choosing $l=2$, we conclude by induction.
\end{proof}

Note that the proof yields the recursive formula
\begin{align}\label{eq:Hnd_recursive1}
  \begin{split}
  &H_n^d(\letteri_1\cdots \letteri_k)(x_1,\dots,x_n)=\\
  &\sum_{j=0}^k\frac{1}{j!}H_{n-1}^d(\letteri_{j+1}\cdots \letteri_k)(x_2,\dots,x_n)\prod_{m=1}^j (x_{2,i_m}-x_{1,i_m}).
  \end{split}
\end{align}
for $H_n^d(\letteri_1\cdots \letteri_k)(x_1,\dots,x_n)$.

\begin{example}
 We have
 \begin{align*}
  &H_3^3(\texttt{123})(x_1,x_2,x_3) = \\
  &H_2^3(\texttt{123})(x_2,x_3)H_2^3(\emptyword)(x_1,x_2) +H_2^3(\texttt{23})(x_2,x_3)H_2^3(\texttt{1})(x_1,x_2)\\
   &\hphantom{=}+H_2^3(\texttt{3})(x_2,x_3)H_2^3(\texttt{12})(x_1,x_2)
 +H_2^3(\emptyword)(x_2,x_3)H_2^3(\texttt{123})(x_1,x_2)\\
  &=\frac{1}{3!}a_{2,1}a_{2,2}a_{2,3}
  +\frac{1}{2!}a_{2,2}a_{2,3}\cdot a_{1,1}
  +a_{2,3}\cdot\frac{1}{2!}a_{1,1}a_{1,2}
  +\frac{1}{3!}a_{1,1}a_{1,2}a_{1,3}
 \end{align*}
where $\emptyword$ is the unit of $\R\langle\texttt{1},\dots,\texttt{d}\rangle$,
the so-called empty word,
which is mapped by all $H_n^d$ to the unit constant polynomial.
\end{example}
\annotationR{}

\begin{remark}In general, $H^d_n$ can be shown to factor as

\[\begin{tikzcd}
\mb R\langle\texttt{1},\dots,\texttt{d}\rangle \dar \drar["H_n^d"] & \\
\mathrm{Qsym}(\mb R[a_{1},\dots,a_{n-1}]) \rar & \mb R[x_{1}, \dots,x_{n}]
\end{tikzcd}\]

where $\mathrm{Qsym}(\mb R[a_{1},\dots,a_{n-1}])$ denotes the ring of \textit{quasi-symmetric functions of level $d$} in the vectors $a_{1},\dots,a_{n-1}$, cf. \cite{diehleftapia2020acta}, and the bottom map sends $a_{i}$ to $x_{i+1} - x_{i}$. We refer to \cite{AFS18} for further details about the signature of a piecewise linear path.
\end{remark}

Let us now explain how to turn $\homomor_n^d$ into an algebra homomorphism. We do this by equipping the $\mb R$-vector space  $\mb R\langle \mathtt{1}, \dots, \mathtt{d} \rangle$ with the commutative \textit{shuffle product} $\shuffle$. This can be defined on words in the following way:
\begin{equation}\label{def:shuffle}
    \letteri_1\cdots \letteri_l \shuffle \letteri_{l+1}\cdots \letteri_k := \sum_{\sigma \in G} \letteri_{\sigma^{-1}(1)} \cdots \letteri_{\sigma^{-1}(k)}
\end{equation}
where $G$ is the set of $\sigma \in S_k$ with $\sigma^{-1}(1) < \dots < \sigma^{-1}(l)$ and $\sigma^{-1}(l+1) < \dots < \sigma^{-1}(k)$. In other words, $\letteri_1\cdots \letteri_l \shuffle \letteri_{l+1}\cdots \letteri_k$ is the sum of all ways of interleaving the two words $\letteri_1\cdots \letteri_l$ and $\letteri_{l+1}\cdots \letteri_k$.

\par
The commutative algebra $(\mb R\langle \mathtt{1}, \dots, \mathtt{d} \rangle, \shuffle)$ is well-understood. As an algebra, it is (infinitely) freely generated by the \textit{Lyndon words}. For details, we refer to e.g.\ \cite{reutenauer1993free}. The connection to iterated integrals is the following:
\begin{proposition}[Ree's shuffle identity \cite{Ree58}]
    Let $X$ be a path in $\mb R^d$ and $p,q \in \mb R\langle \mathtt{1}, \dots, \mathtt{d} \rangle$. Then
    $$\langle S(X), p \shuffle q \rangle = \langle S(X), p \rangle \langle S(X), q \rangle$$
\end{proposition}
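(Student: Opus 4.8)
The plan is to reduce to words and then recognise the product of two iterated integrals as a single integral over a product of simplices, whose natural decomposition into smaller simplices is exactly the shuffle. First I would use bilinearity of both sides in $(p,q)$ — the left side because $\shuffle$ is bilinear and $S(X)$ is linear, the right side manifestly — so that it suffices to treat words $p=\letteri_1\cdots\letteri_l$ and $q=\letteri_{l+1}\cdots\letteri_{l+m}$. For these,
\[
\langle S(X),p\rangle\,\langle S(X),q\rangle=\int_{\Delta_l}dX_{i_1}(s_1)\cdots dX_{i_l}(s_l)\int_{\Delta_m}dX_{i_{l+1}}(s_{l+1})\cdots dX_{i_{l+m}}(s_{l+m}),
\]
where $\Delta_l=\{0\le s_1\le\cdots\le s_l\le 1\}$ and $\Delta_m=\{0\le s_{l+1}\le\cdots\le s_{l+m}\le 1\}$. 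By Fubini this equals one integral of the merged differential form over the region $\Delta_l\times\Delta_m\subseteq[0,1]^{l+m}$, which imposes the two chains $s_1\le\cdots\le s_l$ and $s_{l+1}\le\cdots\le s_{l+m}$ but no relation across blocks.

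The combinatorial heart is then the decomposition of $\Delta_l\times\Delta_m$. Away from the measure-zero locus where two coordinates coincide, each point has all $l+m$ coordinates distinct, hence a unique total order refining both chains; such refinements correspond bijectively to the permutations $\sigma\in G$ in the definition \eqref{def:shuffle} of $\shuffle$. This partitions $\Delta_l\times\Delta_m$, up to a null set, into the simplices $\Delta_\sigma=\{s_{\sigma^{-1}(1)}\le\cdots\le s_{\sigma^{-1}(l+m)}\}$. On each $\Delta_\sigma$, relabelling the integration variables in increasing order turns the merged integrand into precisely the iterated-integral integrand of the word $\letteri_{\sigma^{-1}(1)}\cdots\letteri_{\sigma^{-1}(l+m)}$. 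Summing over $\sigma\in G$ and comparing with \eqref{def:shuffle} yields $\langle S(X),p\shuffle q\rangle$, as desired.

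The main obstacle is making the decomposition rigorous: verifying that the overlaps where coordinates agree are Lebesgue-null and therefore contribute nothing to the (absolutely convergent) integrals, and that refining total orders match the sign-free interleaving index set $G$ exactly. To sidestep the geometry entirely I would offer an inductive alternative: set $\Phi_w(t)=\langle S(X|_{[0,t]}),w\rangle$, use the last-letter recursion $\Phi_{w'\letteri}'(t)=\Phi_{w'}(t)X_i'(t)$ together with the right-deconcatenation form $p\shuffle q=(p'\shuffle q)\,\mathtt{a}+(p\shuffle q')\,\mathtt{b}$ for $p=p'\mathtt{a}$, $q=q'\mathtt{b}$, and induct on $|p|+|q|$. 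By the product rule and the inductive hypothesis, $\Phi_{p\shuffle q}$ and $\Phi_p\Phi_q$ satisfy the same first-order ODE with the common initial value $0$ at $t=0$ (both $p$ and $q$ being nonempty in the inductive step), so they coincide; evaluating at $t=1$ finishes. I expect the only genuine subtlety in either route to be this bookkeeping — the null-set control in the simplex decomposition, or the matching of the shuffle recursion to the Leibniz rule in the inductive version.
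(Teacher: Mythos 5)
Your proof is correct; note for context that the paper does not prove this proposition at all --- it is imported from Ree \cite{Ree58} as a citation --- so any complete argument already goes beyond what the text supplies. Your first route (bilinearity, Fubini, and the decomposition of $\Delta_l\times\Delta_m$ into order simplices) is essentially the classical Chen--Ree argument, and the two points you flag as delicate are harmless: the overlaps lie in the finitely many hyperplanes $\{s_i=s_j\}$, which are Lebesgue-null, and since the coordinates of $X$ are piecewise $C^1$ the integrand $\prod_r X_{i_r}'(s_r)$ is bounded, so the integral is absolutely convergent and splits over the a.e.-disjoint pieces. One bookkeeping caveat: the region $\Delta_\sigma=\{s_{\sigma^{-1}(1)}\le\cdots\le s_{\sigma^{-1}(l+m)}\}$ is contained in $\Delta_l\times\Delta_m$ precisely when $\sigma(1)<\cdots<\sigma(l)$ and $\sigma(l+1)<\cdots<\sigma(l+m)$ (the \emph{positions} of the letters of each factor must increase), after which relabelling produces the word $\letteri_{\sigma^{-1}(1)}\cdots\letteri_{\sigma^{-1}(l+m)}$; the display \eqref{def:shuffle} states the chain condition with $\sigma^{-1}$ instead, so align your indexing with the verbal description (``all ways of interleaving'') rather than with the display verbatim --- this is a convention slip, not a gap in your argument. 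Your second, inductive route is also valid and arguably cleaner at this regularity: since $X$ is only piecewise $C^1$, phrase the conclusion via absolute continuity rather than ODE uniqueness --- $\Phi_{p\shuffle q}$ and $\Phi_p\Phi_q$ are absolutely continuous, have equal derivatives off a finite set by the induction hypothesis and the Leibniz rule, and both vanish at $t=0$ (as $p,q$ are nonempty), hence agree by the fundamental theorem of calculus; the base case is $\Phi_{\emptyword}\equiv 1$ together with $p\shuffle\emptyword=p$. Each route buys something: the simplicial decomposition explains geometrically why the shuffle combinatorics appear, while the induction avoids all measure-theoretic bookkeeping and transfers verbatim to settings (bounded variation, rough paths) where the integral recursion still holds.
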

\begin{corollary}
    The maps $\homomor^d_n$ are algebra homomorphisms
    $$\mb R\langle \mathtt{1}, \dots, \mathtt{d} \rangle_\shuffle := (\mb R\langle \mathtt{1}, \dots, \mathtt{d} \rangle, \shuffle) \to (\mb R[x_1,\ldots,x_n], \cdot).$$
\end{corollary}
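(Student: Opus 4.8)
The plan is to read off the claim directly from Ree's shuffle identity, the only genuine point being the passage from a pointwise identity of functions to an identity of polynomials.

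First I would recall that, by \Cref{def:Hnd_via_sig}, for fixed control points $(x_1,\dots,x_n)$ the value $H_n^d(w)(x_1,\dots,x_n)$ is exactly $\langle S(X),w\rangle$, where $X=\{x_1\to\dots\to x_n\}$ is the associated piecewise linear path. Since $S(X)$ is by definition a linear form on $\mb R\langle\texttt 1,\dots,\texttt d\rangle$, the assignment $w\mapsto H_n^d(w)(x_1,\dots,x_n)$ is $\mb R$-linear for each fixed point, and hence $H_n^d$ is itself $\mb R$-linear as a map into $\mb R[x_1,\dots,x_n]$.

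Second, for multiplicativity I would fix $p,q\in\mb R\langle\texttt 1,\dots,\texttt d\rangle$ and a point $(x_1,\dots,x_n)\in\mb R^{d\times n}$ with associated path $X$. Ree's shuffle identity then gives
\begin{align*}
H_n^d(p\shuffle q)(x_1,\dots,x_n)&=\langle S(X),p\shuffle q\rangle=\langle S(X),p\rangle\,\langle S(X),q\rangle\\
&=\big(H_n^d(p)\cdot H_n^d(q)\big)(x_1,\dots,x_n).
\end{align*}
Thus the polynomials $H_n^d(p\shuffle q)$ and $H_n^d(p)\cdot H_n^d(q)$ agree at every point of $\mb R^{d\times n}$; since a polynomial over $\mb R$ is determined by its values, they coincide as elements of $\mb R[x_1,\dots,x_n]$. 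Finally, unitality is immediate: the empty word $\emptyword$ is the unit of the shuffle algebra and, as observed in the preceding example, $H_n^d(\emptyword)=1$ is the multiplicative unit of $\mb R[x_1,\dots,x_n]$.

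The argument involves no real obstacle; the one step deserving care is the transition from pointwise equality of functions to equality of polynomials, which is valid precisely because we work over the infinite field $\mb R$. Alternatively, one could bypass even this by checking multiplicativity directly on words, combining the recursive formula \eqref{eq:Hnd_recursive1} with the recursive description of the shuffle product, but invoking Ree's identity is both cleaner and conceptually more transparent.
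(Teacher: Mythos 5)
Your proof is correct and is exactly the argument the paper intends: the corollary is stated immediately after Ree's shuffle identity precisely because it follows pointwise from it, together with the (implicit) fact that polynomials over the infinite field $\mb R$ agreeing as functions on $\mb R^{d\times n}$ agree as polynomials. Your write-up simply makes these routine details, including unitality via $H_n^d(\emptyword)=1$, explicit.
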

In particular, the kernel of $\homomor^d_n$, which we denote by  $\mathcal{I}(\pwlinear_n^d)$, is an ideal in $\mb R\langle \mathtt{1}, \dots, \mathtt{d} \rangle_\shuffle$. Geometrically, it can be viewed as the vanishing ideal of $S(\pwlinear_n^d) \subseteq \mathrm{Spec} \ \mb R\langle \mathtt{1}, \dots, \mathtt{d} \rangle$, which is the image of $\pwlinear^d_n$ under the signature (in \cite{preiss2024algebraic}, this is just called the vanishing ideal of $\pwlinear_n^d$). It follows that the ring of signature polynomials $\mc S^d[x_1,\dots,x_n]$ is isomorphic to $\mb R\langle \mathtt{1}, \dots, \mathtt{d} \rangle_\shuffle/\mathcal{I}(\pwlinear_n^d)$.

\section{The stabiliser of positive matrices}\label{sec:Cnd}

In this section, our goal is to determine the group $\convperms^d_n$ from the introduction. Recall that $\convperms^d_n$ is defined as the subgroup of $S_n$ stabilising the set of positive $d\times n$-matrices under the action on columns.
\begin{definition}[{cf. \cite[Section 2]{arkani2014amplituhedron}}]
    An $d \times n$ matrix is called positive if all its maximal minors are positive.
\end{definition}

\begin{remark}
    The \textit{totally positive Grassmanian} is the quotient of positive matrices by the left $GL^+$-action, see \cite[Definition~3.1]{postnikov2006}. However, not every positive matrix is \textit{totally positive} in the sense of loc.\ cit.
\end{remark}

The group $S_n$ acts on the columns of an $(d+1) \times n$-matrix by permutation. $\convperms^d_n$ is defined as the stabiliser of positive matrices under this action. In other words, $\convperms^d_n$ is the subgroup of permutations of the columns of a $(d+1) \times n$ matrix with positive maximal minors such that the resulting matrix has again positive maximal minors.\par
It turns out that the parity of $d$ has a large impact on the structure of $\convperms^d_n$. We give a full description of this structure in \Cref{prop:convex autos odd} and \Cref{prop:convex autos even} below. In fact, the group $\convperms^d_n$ is a subgroup of the automorphisms of a cyclic $d$-polytope with $n$ vertices. To see this, we recall some elementary theory of (cyclic) polytopes.

\begin{lmm}\label{lmm:face condition}
    Let $P$ be a $d$-dimensional polytope with vertex set $V := \{x_1, \dots, x_n\}$. Then $F = \{x_{i_1}, \dots, x_{i_d}\}$ is the vertex set of a facet if and only if
    \begin{equation}\label{eq:face cond det}
    \det
    \begin{pmatrix}
       1 & \dots & 1 & 1\\
   x_{i_1} & \dots & x_{i_d} & y
  \end{pmatrix}
 \end{equation}
 has a fixed sign for all $y \in V-F$.
\end{lmm}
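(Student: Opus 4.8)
The plan is to read off everything from the single affine function
\[
A(y) := \det\begin{pmatrix} 1 & \cdots & 1 & 1 \\ x_{i_1} & \cdots & x_{i_d} & y \end{pmatrix},
\]
regarded as a polynomial of degree at most one in the coordinates of $y$. Expanding the determinant along its last column shows that the coefficients of the linear part of $A$ are, up to sign, the maximal minors of the $(d+1)\times d$ matrix obtained by deleting that column; hence $A$ is a nonconstant affine function precisely when $x_{i_1},\dots,x_{i_d}$ are affinely independent, and in that case its zero set $H := \{y : A(y)=0\}$ is exactly the affine hyperplane they span. Two further bookkeeping facts I would record up front: $A(x_{i_j}) = 0$ for every $j$, since the determinant then has two equal columns; and $V\setminus F\neq\emptyset$, because a $d$-dimensional polytope has at least $d+1$ vertices while $|F| = d$.

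For the forward direction I would argue as follows. If $F$ is the vertex set of a facet, then $\convhull(F)$ is $(d-1)$-dimensional, which forces the $d$ points of $F$ to be affinely independent; thus $A$ is nonconstant and $H = \operatorname{aff}(F)$ is the supporting hyperplane carrying the facet. After possibly replacing $A$ by $-A$ we may assume $P$ lies in the closed halfspace $\{A\geq 0\}$. For $y\in V\setminus F$ we then have $A(y)\geq 0$, and in fact $A(y)>0$: a vertex of $P$ lying on $H$ would be a vertex of the face $P\cap H$, whose vertex set is exactly $F$, contradicting $y\notin F$. Hence $A$ has a fixed (positive) sign on $V\setminus F$.

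For the converse, suppose $A$ has a fixed sign on $V\setminus F$, and after replacing $A$ by $-A$ assume it is positive there. First I would rule out affine dependence of $F$: if $x_{i_1},\dots,x_{i_d}$ were affinely dependent, then $A$ would be constant, and evaluating at $y=x_{i_1}$ would give $A\equiv 0$, contradicting positivity on the nonempty set $V\setminus F$. So $F$ is affinely independent and $H$ is a genuine hyperplane. Since $A$ vanishes on $F$ and is positive on $V\setminus F$, every vertex of $P$ satisfies $A\geq 0$; because $A$ is affine and $P$ is the convex hull of its vertices, all of $P$ satisfies $A\geq 0$, so $H$ is a supporting hyperplane and $P\cap H$ is a face. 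Its vertices are precisely the vertices of $P$ on which $A$ vanishes, namely exactly $F$; and since $F$ is affinely independent, this face $\convhull(F)$ has dimension $d-1$, hence is a facet.

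The argument is elementary convexity once the role of $A$ is isolated, so I do not anticipate a serious obstacle. The one point that needs care is the two-sided handling of the sign (the reductions to $A\geq 0$), and, in the converse, the implication that a vertex of $P$ lying on $H$ belongs to $F$. The latter rests on the standard fact that the vertices of a face $P\cap H$ are exactly the vertices of $P$ contained in the supporting hyperplane $H$; I would either cite this or derive it in one line from the definition of a face as $P$ intersected with a supporting hyperplane.
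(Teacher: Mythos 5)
Your proof is correct and follows essentially the same approach as the paper: both identify the determinant as an affine function of $y$ that vanishes exactly on the hyperplane spanned by $x_{i_1},\dots,x_{i_d}$ and keeps a constant sign on each open half-space, so the sign condition says precisely that all remaining vertices lie strictly on one side of $\operatorname{aff}(F)$. The only difference is one of completeness: the paper stops at this observation and leaves the convexity bookkeeping implicit, whereas you spell out the supporting-hyperplane argument, the affinely dependent (degenerate) case, and the fact that the vertices of the face $P\cap H$ are exactly the vertices of $P$ lying on $H$.
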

\begin{proof}
    Note that
    $$\det
    \begin{pmatrix}
    1 & \dots & 1 & 1\\
   x_{i_1} & \dots & x_{i_d} & y
  \end{pmatrix} = \det
    \begin{pmatrix}
   x_{i_2} - x_{i_1} & \dots & x_{i_d} - x_{i_1} & y - x_{i_1}
  \end{pmatrix}$$
  As a function in $y$, this determinant vanishes exactly on the hyperplane spanned by $x_{i_1},\dots,x_{i_d}$ and has constant sign on the two associated open half-spaces.
\end{proof}

\begin{corollary}[Gale's evenness criterion, \cite{Gale1963NeighborlyAC}]\label{cor:gale}
    Let $P$ be a polytope with vertices $x_1, \dots, x_n$ such that all maximal minors of \eqref{eq:matr}
have the same sign. Then the facets of $P$ are exactly the sets $F= x_I := \{x_{i_1}, \dots, x_{i_d}\}$ such that $\#\{i \in I | \ i > j\}$ has the same parity for all $j \in [n] - I$. In other words, $P$ is a cyclic polytope.
\end{corollary}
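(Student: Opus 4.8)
The plan is to deduce the criterion directly from \Cref{lmm:face condition}, which already reduces the facet condition to the sign behavior of a single $(d+1)\times(d+1)$ determinant. Since the hypothesis forces all maximal minors of \eqref{eq:matr} to be nonzero, any $d+1$ of the points $x_1,\dots,x_n$ are affinely independent; in particular $P$ is a full-dimensional simplicial $d$-polytope, so each facet has exactly $d$ vertices and the lemma applies to every candidate set $F=x_I$ with $\#I=d$. First I would fix $I=\{i_1<\dots<i_d\}$ and, for $j\in[n]-I$, set
$$M_j=\begin{pmatrix} 1 & \dots & 1 & 1\\ x_{i_1} & \dots & x_{i_d} & x_j\end{pmatrix}.$$

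The central observation is that $M_j$ is obtained from the matrix \eqref{eq:matr} by selecting the columns indexed by $I\cup\{j\}$, but listed in the order $i_1,\dots,i_d,j$ rather than in increasing order. The key step is therefore to compare $\det M_j$ with the maximal minor $\Delta_{I\cup\{j\}}$ of \eqref{eq:matr} taken with columns in increasing order: these differ exactly by the sign of the permutation sorting $(i_1,\dots,i_d,j)$. Bringing the last entry $j$ into its correct position requires transposing it past precisely those $i_k$ with $i_k>j$, whence
$$\det M_j=(-1)^{\#\{i\in I\ |\ i>j\}}\,\Delta_{I\cup\{j\}}.$$

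Next I would invoke the hypothesis: all maximal minors $\Delta_{I\cup\{j\}}$ share a common sign $\varepsilon\in\{+1,-1\}$, so $\sgn(\det M_j)=\varepsilon\cdot(-1)^{\#\{i\in I\ |\ i>j\}}$ for every $j\in[n]-I$. By \Cref{lmm:face condition}, $F=x_I$ is a facet if and only if $\det M_j$ has a fixed sign across all $j\in[n]-I$, which by the displayed identity holds if and only if $(-1)^{\#\{i\in I\ |\ i>j\}}$ is constant in $j$ — that is, if and only if $\#\{i\in I\ |\ i>j\}$ has the same parity for all $j\in[n]-I$. This is exactly Gale's condition.

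Finally, since this parity criterion singles out the facet-defining $d$-subsets purely in terms of the combinatorial data $(n,d)$ and the linear order on the vertices, the face lattice of $P$ matches that of the cyclic $d$-polytope on $n$ vertices, yielding the last sentence. I expect no serious obstacle: the only point requiring care is the bookkeeping of the sorting sign in the key step, where one must verify that $j$ passes exactly the larger indices and that it is the \emph{common} sign of the minors that makes the \emph{parity}, rather than the precise value, of $\#\{i\in I\mid i>j\}$ the decisive quantity.
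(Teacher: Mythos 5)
Your proof is correct and takes essentially the same route as the paper: both deduce the criterion from \Cref{lmm:face condition} by observing that the determinant \eqref{eq:face cond det} equals $(-1)^{\#\{i\in I\,\mid\, i>j\}}$ times the maximal minor of \eqref{eq:matr} with columns in increasing order, so that constancy of sign over $j$ is equivalent to constancy of the parity. Your extra remark that nonvanishing maximal minors make $P$ simplicial (so that every facet really has exactly $d$ vertices) is a point the paper leaves implicit, but it does not change the argument.
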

\begin{proof}
    This follows immediately from \Cref{lmm:face condition} as the sign of the determinant \eqref{eq:face cond det} is exactly $(-1)^{\#\{i \in I | \ i > j\}}$ times the sign of a maximal minor of \eqref{eq:matr} for $y=x_j\notin F$.
\end{proof}

\begin{corollary}\label{cor:conv pres is aut}
Let $x_1,\dots,x_n$ be such that the matrix
    \begin{equation*}
      \begin{pmatrix}
      1 & \dots & 1\\
       x_1 & \dots & x_n
      \end{pmatrix}
 \end{equation*}
 has positive maximal minors. Then for every $\pi \in S_n$ such that the matrix
 \begin{equation*}
      \begin{pmatrix}
      1 & \dots & 1\\
       x_{\pi(1)} & \dots & x_{\pi(n)}
      \end{pmatrix}
 \end{equation*}
 has positive maximal minors, $x_i \mapsto x_{\pi(i)}$ is a combinatorial automorphism of the cyclic polytope $P = \convhull(x_1, \dots, x_n)$. That is, it defines an automorphism of its face lattice.
\end{corollary}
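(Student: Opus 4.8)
The plan is to reduce the claim to a purely combinatorial statement about facets and then feed it into Gale's evenness criterion. Write $\phi$ for the vertex map $x_i \mapsto x_{\pi(i)}$ and set $P = \convhull(x_1,\dots,x_n)$. Since the face lattice of a polytope is determined by its vertex--facet incidences -- every face is the intersection of the facets containing it and the join of the vertices it contains -- a bijection of the vertex set that carries facets to facets automatically extends to an automorphism of the whole face lattice; I would invoke this as a standard fact (e.g.\ Ziegler). Thus it suffices to show that $\phi$ maps each facet of $P$ to a facet of $P$.

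The next step is to apply \Cref{cor:gale} twice. Applied to the original configuration $(x_1,\dots,x_n)$, whose matrix has positive maximal minors, the criterion says that a $d$-element set $\{x_i : i \in I\}$ is a facet of $P$ exactly when the index set $I \subseteq [n]$ satisfies Gale's evenness condition; denote the collection of all such $d$-subsets by $\mathcal G$. The crucial observation is that $\mathcal G$ depends only on $n$ and $d$, not on the actual coordinates. Applied to the permuted configuration $(x_{\pi(1)},\dots,x_{\pi(n)})$ -- which by hypothesis also has positive maximal minors and whose convex hull is the \emph{same} polytope $P$ -- \Cref{cor:gale} says that $\{x_{\pi(k)} : k \in K\}$ is a facet of $P$ exactly when $K \in \mathcal G$, with the \emph{same} collection $\mathcal G$.

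Comparing the two descriptions yields two isomorphisms $\psi,\psi'$ from the boundary complex of the abstract cyclic polytope (defined by the facet collection $\mathcal G$) onto the boundary complex of $P$, given on vertices by $\psi(i) = x_i$ and $\psi'(k) = x_{\pi(k)}$. One then checks on vertices that $\phi = \psi' \circ \psi^{-1}$, since $\psi^{-1}(x_i) = i$ and $\psi'(i) = x_{\pi(i)} = \phi(x_i)$; hence $\phi$ is a composite of face-lattice isomorphisms and therefore itself an automorphism of the face lattice of $P$. Equivalently and more by hand: for a facet $\{x_i : i \in I\}$ with $I \in \mathcal G$, its image under $\phi$ is $\{x_j : j \in \pi(I)\}$, and the permuted application of \Cref{cor:gale} forces $\pi(I) \in \mathcal G$, so the image is again a facet.

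The only genuinely nontrivial input is the coordinate-independence of the facet collection $\mathcal G$, which is precisely the content of Gale's evenness criterion; once that is available, the remainder is bookkeeping. The single point I would take care to state explicitly, rather than use silently, is the reduction from ``preserves facets'' to ``automorphism of the face lattice'', as it is this step that upgrades a bijection of maximal faces to an isomorphism of the full poset.
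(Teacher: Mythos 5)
Your proof is correct and is essentially the paper's own argument, just spelled out: the paper's proof is the one-line observation that the facet condition in \Cref{cor:gale} depends only on the index sets and not on the coordinates, which is exactly your coordinate-independence of $\mathcal G$ applied to both the original and the permuted (still positive) configuration. The extra care you take in upgrading ``preserves facets'' to ``automorphism of the face lattice'' is a standard fact the paper leaves implicit, and your composition $\phi = \psi' \circ \psi^{-1}$ packages the same reasoning cleanly.
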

\begin{proof}
    This is clear from \Cref{cor:gale} since the face condition there is only a condition on indices: it does not depend on the $x_i$ themselves.
\end{proof}

We will see in \Cref{cor:ev dim conv pres} that for even $d$ the converse is true up to sign, that is, combinatorial automorphisms preserve the property that all minors have the same sign. This is not true in odd dimensions:

\begin{example}\label{ex:counter convex}
 Let $x_1,\dots, x_6\in\mathbb R^3$ be such that
 \begin{equation*}
  \begin{pmatrix}
  1 & \dots & 1\\
   x_1 & \dots & x_6
  \end{pmatrix}
 \end{equation*}
 has positive maximal minors. Then $(x_1,x_2,x_3,x_4,x_5,x_6)\mapsto(x_6,x_2,x_3,x_4,x_5,x_1)$ is an automorphism of cyclic polytopes, but
 \begin{equation*}
 \det
  \begin{pmatrix}
  1 & 1 & 1 & 1\\
   x_6 & x_2 & x_3 & x_4
  \end{pmatrix} < 0
 \end{equation*}
while
 \begin{equation*}
 \det
  \begin{pmatrix}
  1 & 1 & 1 & 1\\
   x_2 & x_3 & x_4 & x_5
  \end{pmatrix}>0
 \end{equation*}
\end{example}
Let us now give a full description of $\convperms^d_n$. We will use the following characterization of the combinatorial automorphisms of a cyclic polytope:

\begin{theorem}[{\cite[Theorem 8.3]{KaibelAutomorphismGO}}]\label{thm:cyclaut}
The combinatorial automorphism group of a cyclic $d$-polytope with $n$ vertices is isomorphic to\par
\vspace{1em}
{
\setlength\tabcolsep{2em}
\renewcommand{\arraystretch}{2}
\centering
\begin{tabular}[t]{cccc}
         & $n = d + 1$ & $n = d + 2$ & $n \geq d+3$ \\ \hline
$d$ even & $\mb S_n$   & $\mb S_{\frac n 2} \text{ wr } \mb Z_2$                     & $\mb D_n$ \\ \hline
$d$ odd  & $\mb S_n$   & $\mb S_{\lceil{\frac n 2}\rceil} \times \mb S_{\lfloor{\frac n 2}\rfloor}$ & $\mb Z_2 \times \mb Z_2$
\end{tabular}\par}
\vspace{1em}
\end{theorem}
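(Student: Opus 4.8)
The plan is to reduce the statement to a purely combinatorial computation and then carry it out via Gale's evenness criterion. First I would record the standard fact that a combinatorial automorphism of a polytope is determined by its action on the vertices (the atoms of the face lattice), and that a permutation $\pi\in\mb S_n$ of the labels $x_1,\dots,x_n$ induces a lattice automorphism if and only if it preserves the set of facets (the coatoms), since every face is the intersection of the facets containing it. Hence the automorphism group is exactly the stabiliser in $\mb S_n$ of the facet system $\mc F_{n,d}\subseteq\binom{[n]}{d}$, and by \Cref{cor:gale} this system depends only on $n$ and $d$: namely $I\in\mc F_{n,d}$ iff $\#\{i\in I\mid i>j\}$ has the same parity for all $j\in[n]\setminus I$. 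The whole theorem is therefore the determination of $\operatorname{Stab}_{\mb S_n}(\mc F_{n,d})$.

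\textbf{A circular reformulation and the lower bounds.} The key technical step, which makes the symmetry visible, is to rephrase Gale's condition on the circle $\mb Z_n$ (with $n$ and $1$ adjacent). Writing $I$ as a union of maximal \emph{circular runs}, the parity condition amounts to: every circular run not meeting the boundary edge $\{n,1\}$ has even length. Since the run lengths sum to $d$, this collapses to a clean dichotomy: for $d$ even, $I$ is a facet iff \emph{all} circular runs are even; for $d$ odd, iff $I$ has exactly one odd circular run and that run meets $\{1,n\}$. The lower bounds for $n\ge d+3$ are then immediate verifications. For $d$ even the predicate ``all circular runs are even'' is invariant under the full dihedral action of $\mb D_n$ on $\mb Z_n$ (rotation $i\mapsto i+1$ and reflection $i\mapsto n+1-i$ both preserve the multiset of circular run lengths), so $\mb D_n\subseteq\Aut$. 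For $d$ odd the rotation moves the distinguished edge $\{n,1\}$ and fails, but the reflection $r\colon i\mapsto n+1-i$ and the endpoint transposition $(1\,n)$ (equivalently the interior reversal $r\cdot(1\,n)$) each preserve the odd-$d$ predicate, generating a copy of $\mb Z_2\times\mb Z_2\subseteq\Aut$.

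\textbf{The rigidity (main obstacle).} The genuine difficulty is the reverse inclusion for $n\ge d+3$: proving that these are \emph{all} automorphisms, i.e.\ that the vertex order is rigid. Here I would extract order-theoretic invariants of $\mc F_{n,d}$ that any stabiliser element must respect and use them to pin down $\pi$. In the odd case the vertex--facet incidence degrees single out the endpoint pair $\{1,n\}$ and, refining, the successive shells $\{2,n-1\},\{3,n-2\},\dots$, which already forces $\pi$ into $\mb Z_2\times\mb Z_2$; in the even case one instead reconstructs the $n$-cycle $1\!-\!2\!-\!\cdots\!-\!n\!-\!1$ up to $\mb D_n$ from the adjacency relation ``$\{i,j\}$ lies in a common facet whose runs realise a prescribed local pattern''. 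The hypothesis $n\ge d+3$ is exactly what guarantees enough non-facets to make this reconstruction unambiguous; for $n\le d+2$ the polytope is too neighborly and extra symmetry survives. I expect this rigidity argument, which is the bulk of \cite[Theorem 8.3]{KaibelAutomorphismGO}, to be organised as an induction that peels off the extremal vertices $1$ and $n$ and passes to a vertex figure.

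\textbf{The small cases.} Finally I would dispatch $n=d+1$ and $n=d+2$ directly. For $n=d+1$ the polytope is a $d$-simplex: every $d$-subset of $[d+1]$ is a facet (the evenness condition is vacuous when the complement is a single point), so every permutation qualifies and $\Aut\cong\mb S_n$. For $n=d+2$, the Gale transform turns the configuration into $n$ signed points on a line, and the automorphism group is the symmetry group of the resulting sign vector; the cyclic polytope's pattern is balanced into two equal classes when $d$ is even, yielding $\mb S_{n/2}\text{ wr }\mb Z_2$ (permute within each class and swap the two equal classes), and into unequal classes of sizes $\lceil n/2\rceil,\lfloor n/2\rfloor$ when $d$ is odd, where the classes can no longer be exchanged, yielding $\mb S_{\lceil n/2\rceil}\times\mb S_{\lfloor n/2\rfloor}$. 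Both $n=d+2$ groups can alternatively be read off directly from the circular-run description of $\mc F_{d+2,d}$ without invoking Gale transforms.
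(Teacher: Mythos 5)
Note first that the paper does not prove this statement at all: it is imported verbatim from Kaibel--Wassmer \cite[Theorem 8.3]{KaibelAutomorphismGO}, so your proposal has to be judged as a self-contained proof of that external theorem. The parts you actually carry out are sound. The reduction of $\Aut(P)$ to $\operatorname{Stab}_{\mb S_n}(\mc F_{n,d})$ is correct (faces are intersections of facets), the circular-run reformulation of Gale's criterion (\Cref{cor:gale}) is correct under the reading ``run containing $1$ or $n$'' --- note that since $n$ and $1$ are circularly adjacent, at most one circular run can contain an element of $\{1,n\}$, which is what makes your parity dichotomy for even/odd $d$ work --- the lower bounds $\mb D_n\subseteq\Aut$ (resp.\ $\mb Z_2\times\mb Z_2\subseteq\Aut$) follow, and the cases $n=d+1$ and $n=d+2$ (via the one-dimensional Gale dual, or directly from the run description) are complete and match the table.

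The genuine gap is the reverse inclusion for $n\geq d+3$, which is the actual content of the cited theorem, and which your sketch defers with ``I would extract \dots'' and ``I expect \dots'' rather than proving. Concretely, your proposed invariants do not suffice as stated. For $d$ odd, vertex--facet incidence degrees do \emph{not} single out the successive shells: in $C_3(n)$ the degree sequence is $(n-1,3,4,4,\dots,4,3,n-1)$, so degrees distinguish only $\{1,n\}$ and $\{2,n-1\}$, and all interior vertices look alike; the claimed ``refinement'' needs an actual inductive argument (e.g.\ codegrees with already-pinned vertices). Worse, even granting that $\pi$ preserves every shell $\{i,n+1-i\}$ setwise, this a priori leaves $2^{\lfloor n/2\rfloor}$ candidate involutions, and you never show that the facet system couples these local flips into just $\mb Z_2\times\mb Z_2$: for instance the transposition $(3\,4)$ in $C_3(6)$ preserves all degrees and all shells, yet sends the facet $\{1,2,3\}$ to the non-facet $\{1,2,4\}$, and nothing in your argument performs this exclusion. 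Similarly, in the even case the adjacency relation from which you would reconstruct the $n$-cycle up to $\mb D_n$ is never defined, and verifying that it recovers exactly $j=i\pm 1$ (and identifying where the hypothesis $n\geq d+3$ enters) is precisely the nontrivial bulk of Kaibel--Wassmer's proof. As it stands, the proposal establishes the containments $\supseteq$ in all cases but not the equalities for $n\geq d+3$.
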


We start with the case of odd dimension.

\begin{proposition}\label{prop:convex autos odd}
Assume $d$ is odd. Then the group $\convperms_n^d$ is
\begin{enumerate}[i)]
    \item $A_n$ if $n=d+1$,
    \item $A_n\cap (S_{\frac{n-1} 2}\times S_{\frac{n+1} 2})$ if $n=d+2$,
    \item $\mathbb Z/2$ if $n \geq d+3$ and $\frac{d+1}{2}$ is even and
    \item  $1$ if $n \geq d+3$ and $\frac{d+1}{2}$ is odd.
\end{enumerate}
\end{proposition}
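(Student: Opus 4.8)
The plan is to combine two ingredients: the characterization of positive matrices via the Gale/sign structure (Corollary \ref{cor:gale}) with the known combinatorial automorphism group (Theorem \ref{thm:cyclaut}). Corollary \ref{cor:conv pres is aut} tells us that every $\pi$ whose permuted matrix is again positive induces a combinatorial automorphism, so $\convperms_n^d$ sits inside the automorphism group listed in Theorem \ref{thm:cyclaut}. Hence in each of the four cases we only need to decide \emph{which} automorphisms actually preserve positivity of all maximal minors, i.e. which $\pi$ keep every $(d+1)\times(d+1)$ minor strictly positive rather than merely keeping the sign pattern compatible with a cyclic polytope. The key computational tool is that for the positively-oriented configuration $x_1,\dots,x_n$, the sign of the $(d+1)$-minor on columns $i_0<i_1<\dots<i_d$ after applying $\pi$ is $+1$ precisely when the permutation restricting $\pi$ to that index set is even, since reordering columns back into increasing order introduces the sign of that restricted permutation. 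So positivity of \emph{all} maximal minors amounts to: for every $(d+1)$-subset of $[n]$, the permutation induced by $\pi$ on that subset (after sorting) is even.

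First I would treat $n=d+1$ (cases i)): here there is a single maximal minor, on all $n$ columns, and its sign after permuting by $\pi$ is exactly $\sgn(\pi)$. Thus $\pi\in\convperms_n^d$ iff $\pi$ is even, giving $A_n$, which is consistent with the full automorphism group $\mb S_n$ from Theorem \ref{thm:cyclaut}. Next, for $n=d+2$ (case ii)), there are exactly $n$ maximal minors, each obtained by deleting one column; I would compute the induced sign on each such $(d+1)$-subset. Automorphisms here form $\mb S_{\lceil n/2\rceil}\times\mb S_{\lfloor n/2\rfloor}$ by Theorem \ref{thm:cyclaut}, and I expect the positivity condition to cut this down to exactly those elements that are even on every deletion-subset, which should coincide with $A_n\cap(S_{(n-1)/2}\times S_{(n+1)/2})$ once one checks the parity of the induced permutation on each of the $n$ subsets simultaneously.

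The substantive cases are iii) and iv), where $n\geq d+3$ and the automorphism group is the small group $\mb Z_2\times\mb Z_2$. By Theorem \ref{thm:cyclaut} this group is generated by two involutions; for odd $d$ one of these is the ``reversal'' $x_i\mapsto x_{n+1-i}$ and the other is a reflection fixing the endpoints, and I would make these generators explicit. For each generator $\pi$ I would determine the sign it induces on an arbitrary $(d+1)$-subset, reducing to computing the parity of the restriction of the reversal permutation to a $(d+1)$-element set. A clean way to do this is to note that the reversal on a set of size $d+1$ is a product of $\lfloor(d+1)/2\rfloor$ transpositions, so its sign is $(-1)^{\lfloor(d+1)/2\rfloor}=(-1)^{(d+1)/2}$ when $d$ is odd (using that $d+1$ is even). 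This is where the parity of $\tfrac{d+1}{2}$ enters: the reversal preserves positivity of every minor exactly when $\tfrac{d+1}{2}$ is even. Checking the second generator analogously, I would verify that it never preserves positivity (for $n\geq d+3$), so that $\convperms_n^d$ is $\mb Z/2$ when $\tfrac{d+1}{2}$ is even and trivial when $\tfrac{d+1}{2}$ is odd.

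The main obstacle I anticipate is case ii), where positivity of all $n$ simultaneous minors must be checked and the resulting constraint identified with $A_n\cap(S_{(n-1)/2}\times S_{(n+1)/2})$: one must verify that the parity conditions on the $n$ deletion-subsets are \emph{consistent} and exactly carve out the claimed intersection, rather than imposing contradictory or weaker constraints. A careful bookkeeping of how $\sgn(\pi)$ relates to the sign induced on each $(d+1)$-subset—essentially tracking how many inversions $\pi$ creates relative to each subset—will be needed; I would handle this by expressing the induced sign on a codimension-one subset in terms of $\sgn(\pi)$ and the position of the deleted index, then intersecting the resulting conditions. Example \ref{ex:counter convex} is a useful sanity check, as it exhibits in $d=3$, $n=6$ (so $\tfrac{d+1}{2}=2$ even, $n\geq d+3$) an automorphism that fails to preserve positivity, confirming that the automorphism group is genuinely larger than $\convperms_n^d$ in the $n\geq d+3$ regime.
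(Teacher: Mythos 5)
You follow the same route as the paper---reduce via \Cref{cor:conv pres is aut} and \Cref{thm:cyclaut} to deciding which combinatorial automorphisms preserve positivity, then decide this by the parity of the permutation induced on each $(d+1)$-subset of columns---and your sign rule, your case i), and your analysis of the full reversal $\tau\colon i\mapsto n+1-i$ are all correct. However, case iv) has a genuine logical gap: the group $\mb Z_2\times\mb Z_2$ has \emph{three} nontrivial elements, namely the full reversal $\tau$, the inner reversal $s$ fixing both endpoints, and their product $\pi=\tau s$, which swaps $x_1\leftrightarrow x_n$ and fixes all inner vertices; you only ever discuss the two generators $\tau$ and $s$. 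When $\frac{d+1}{2}$ is even this is enough, because $\convperms_n^d$ is a group: $\tau\in\convperms_n^d$ and $s\notin\convperms_n^d$ force $\pi=\tau s\notin\convperms_n^d$. But when $\frac{d+1}{2}$ is odd, knowing $\tau,s\notin\convperms_n^d$ does not exclude $\convperms_n^d\cap\Aut(P)=\{1,\pi\}$---a product of two non-members of a subgroup can perfectly well be a member---so ``trivial'' does not follow from what you checked. You must rule out $\pi$ directly; with your own tool this is quick: a $(d+1)$-subset containing both $1$ and $n$ picks up a single transposition (sign $-1$), while a subset containing neither (which exists precisely because $n\geq d+3$) is untouched (sign $+1$), so $\pi$ always creates a negative minor. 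This is exactly the paper's computation comparing the first $d+1$ columns of $\pi(X)$ with columns $2,\dots,d+2$, and the same two-subset comparison is what makes your unproven claim about $s$ true (a subset avoiding $\{1,n\}$ gets sign $(-1)^{(d+1)/2}$, one containing exactly one endpoint gets sign $(-1)^{(d-1)/2}$), so it should be carried out rather than announced.

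Case ii) is the other gap, which you acknowledge but do not close: ``I expect the positivity condition to cut this down'' is not a proof. Your plan does go through: for $\sigma$ preserving the two parity classes and $I_j=[n]\setminus\{j\}$, the number of inversions of $\sigma$ involving $j$ is congruent to $j+\sigma(j)$ modulo $2$, hence the sign of the deletion minor equals $\sgn(\sigma)\cdot(-1)^{j+\sigma(j)}=\sgn(\sigma)$; so all $n$ conditions agree and carve out exactly $A_n\cap(S_{(n-1)/2}\times S_{(n+1)/2})$. The paper avoids this bookkeeping via \Cref{lmm:sign switch}: every transposition within a parity class flips the sign of \emph{all} maximal minors, so an element of $S_{(n-1)/2}\times S_{(n+1)/2}$ preserves positivity iff it is a product of an even number of such transpositions, i.e.\ iff it lies in $A_n$. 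As written, then, your proposal establishes i), half of iii), and leaves ii) and iv) incomplete.
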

For the proof we need the following small lemma:
\begin{lmm}\label{lmm:sign switch}
    Let \begin{equation*}
      X = \begin{pmatrix}
       x_1 & \dots & x_{n+1}
      \end{pmatrix}
 \end{equation*}
 be a $n \times (n+1)$-matrix such that all minors have the same sign. Then switching two even or two odd columns switches the sign of all minors of $X$.
\end{lmm}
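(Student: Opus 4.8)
The plan is to prove the statement by directly tracking how each maximal minor transforms under the column transposition and then reading off the parity. Write $M_j := \det(X_{\hat j})$ for the maximal minor obtained by deleting the $j$-th column of $X$, so that $X$ has exactly $n+1$ maximal minors $M_1,\dots,M_{n+1}$, all of the same sign by hypothesis. Let $X'$ denote the matrix obtained from $X$ by swapping columns $a$ and $b$ with $a<b$, and write $M'_j := \det(X'_{\hat j})$ for its minors. The goal is to express each $M'_j$ as $\pm M_k$ for a suitable $k$, and to show that when $a\equiv b \pmod 2$ every sign that appears is $-1$; this forces all $M'_j$ to share the sign opposite to that of the $M_j$, which is exactly the claim.

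First I would handle the indices $j\notin\{a,b\}$. Here the column list defining $M'_j$ is literally the list defining $M_j$ with the entries in positions $a$ and $b$ interchanged, so $M'_j=-M_j$. Next I would treat $j=a$ and $j=b$, which are the delicate cases: deleting the column now sitting in position $a$ (respectively $b$) of $X'$ removes the original column $x_b$ (respectively $x_a$), so $M'_a$ and $M'_b$ are minors of $X$ with a reordered column list. Rewriting these lists and counting the adjacent transpositions needed to slide the displaced column past the $b-a-1$ columns lying strictly between positions $a$ and $b$ yields $M'_a=(-1)^{\,b-a-1}M_b$ and $M'_b=(-1)^{\,b-a-1}M_a$.

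Finally I would specialize to the case at hand. Swapping two even or two odd columns means $a\equiv b \pmod 2$, so $b-a$ is even and the exponent $b-a-1$ is odd; hence $(-1)^{\,b-a-1}=-1$. Combining the two cases gives $M'_j=-M_j$ for $j\notin\{a,b\}$ together with $M'_a=-M_b$ and $M'_b=-M_a$, so every minor of $X'$ equals $(-1)$ times a minor of $X$. Since all $M_j$ share a common sign, all $M'_j$ share the opposite common sign, which is the assertion. I expect the only real obstacle to be the bookkeeping in the $j=a,b$ cases: one must set up the column orderings explicitly and count the intermediate columns correctly so that the parity $(-1)^{\,b-a-1}$ comes out right. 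A cleaner but essentially equivalent route is to use that $\ker X$ is spanned by the alternating vector $v$ with $v_j=(-1)^{\,j-1}M_j$ and that the transposition acts on $v$ by swapping its $a$- and $b$-entries; I would fall back on this if the direct sign count became unwieldy.
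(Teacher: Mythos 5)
Your proof is correct and takes essentially the same route as the paper's: both split according to whether the deleted column is one of the two swapped columns (a single column transposition, flipping the sign) or is one of them (where sliding the displaced column past the $b-a-1$ intermediate columns gives an odd number of adjacent transpositions, since $a\equiv b\pmod 2$), the only difference being that you index minors by the deleted column while the paper indexes them by the retained index set. The kernel-vector alternative you mention at the end is a nice observation but is not needed, and is not what the paper does.
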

\begin{proof}
    Let $i,j$ be the indices of the columns that are switched and write $X'$ for the matrix obtained from $X$ by switching columns $i$ and $j$. For $I = \{i_1, \dots, i_n\}$ with $i_1 < \dots < i_n$ we write $X_I$ for the $n\times n$ matrix $\begin{pmatrix} x_{i_1} & \dots & x_{i_n} \end{pmatrix}$.\\
    If $I$ contains both $i$ and $j$, then $X'_I$ is obtained from $X_I$ by switching two columns and thus inverts the sign of the determinant. Now assume that $I$ does not contain $j$, so it contains $i$. $X'_I$ is obtained from $X_I$ by replacing $x_i$ with $x_j$. In particular, if $J := I \cup \{j\}- \{i\}$, then $X_I'$ has the same set of columns as $X_J$. Now note that every integer between $i$ and $j$ is contained in $I$ and since $i-j$ is even the number of such integers is odd. Thus, an odd number of transpositions is required to obtain $X_J$ from $X_I'$, and thus the determinant of $X_I'$ and the determinant of $X_J$ have inverse signs.
\end{proof}

\begin{proof}[Proof of \Cref{prop:convex autos odd}]
    Let $x_1,\dots,x_n$ be such that \eqref{eq:matr} is positive. Set $P = \convhull(x_1, \dots, x_n)$. We need to determine the subgroup of $\Aut(P)$ consisting of automorphisms that preserve positivity of all minors. Given $\pi \in \Aut(P)$ we write $\pi(X)$ for the matrix whose $i$-th column is the $\pi(i)$-th column of $X$.
    \begin{itemize}
        \item $i)$ is clear from \Cref{thm:cyclaut} as the determinant is an alternating map (here $P$ is a simplex).
    \item For $ii)$ we use that by \Cref{thm:cyclaut} we have $\Aut(P)= S_{\frac{n-1} 2}\times S_{\frac{n+1} 2}$ if $n=d+2$, where the first factor acts on the even and the second on the odd vertices of $P$. Thus, the statement follows from \Cref{lmm:sign switch}.
   \item For $iii)$ and $iv)$ we use that, again by \Cref{thm:cyclaut}, $\Aut(P)=\mb Z /2 \times \mb Z/2$ where the first factor acts by switching the first and the last vertex and the second factor acts by inverting the order on the inner vertices. Let $\pi$ be the generator of the first factor and $s$ the generator of the second factor. If $\frac{d+1}{2}$ is even, then $\tau = \pi \circ s$ preserves the sign of minors, otherwise it flips the sign. This is because any minor of $\pi \circ s(X)$ is turned into a submatrix of $X$ by $\frac{d+1}{2}$ transpositions. Next, aiming for a contradiction, assume $s$ preserves positive minors. Then $\pi \circ s \circ s = \pi$ either preserves or flips the sign of all minors. But this is not the case: Consider the matrix of the first $d+1$ columns of $\pi(X)$. This matrix is turned into a submatrix of $X$ by $d$ transpositions of columns, so it has negative determinant. But the matrix of columns $2,\dots,d+2$ of $\pi(X)$ is still a submatrix of $X$ and has, in particular, positive determinant (see\ \Cref{ex:counter convex} for an example in the case $d=3$).
    \end{itemize}
\end{proof}

\begin{proposition}\label{prop:convex autos even}
Assume $d$ is even. Then the group $\convperms_n^d$ is
\begin{enumerate}[i)]
    \item $A_n$ if $n=d+1$,
    \item $(A_n\cap (S_{ \frac n 2}\times S_{\frac n 2})) \rtimes \mb Z/2$ if $n=d+2$ and $\frac{d}{2}$ is even,
    \item $\ker \phi$ for the map $\phi: S_{ \frac n 2}\times S_{\frac n 2} \rtimes \mb Z/2 \to \{-1,1\}$ that maps $(\omega,\pi,\tau)$ to $\sgn(\omega)\sgn(\pi)\gamma(\tau)$ (where $\gamma$ maps $\tau$ to $-1$), if $n=d+2$ and $\frac{d}{2}$ is odd,
    \item $\mb D_n$ if $n \geq d+3$ and $\frac{d}{2}$ is even and
    \item  $\mb Z/n$ if $n \geq d+3$ and $\frac{d}{2}$ is odd.
\end{enumerate}
\end{proposition}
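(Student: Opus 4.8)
The plan is to compute, for each combinatorial automorphism $g$ of $P=\convhull(x_1,\dots,x_n)$, the sign by which $g$ rescales the maximal minors of the positive matrix \eqref{eq:matr}, and to read off $\convperms_n^d$ as those automorphisms that rescale by $+1$. The crucial point, already implicit in \Cref{lmm:sign switch} and in the proof of \Cref{prop:convex autos odd}, is that a combinatorial automorphism rescales \emph{all} maximal minors by a single common sign $c(g)\in\{\pm1\}$. This property holds on a generating set and is preserved under composition (if $g$ scales uniformly by $c(g)$ and $h$ scales any matrix with equal-sign minors uniformly by $c(h)$, then $hg$ scales uniformly by $c(h)c(g)$), so $g\mapsto c(g)$ is a homomorphism $\Aut(P)\to\{\pm1\}$ and $\convperms_n^d=\ker c$ by \Cref{cor:conv pres is aut}. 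It therefore suffices to evaluate $c$ on the generators of the groups in \Cref{thm:cyclaut}.

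First I set up the two building blocks. By \Cref{lmm:sign switch}, applied to the $(d+1)\times(d+2)$ matrix \eqref{eq:matr} in the case $n=d+2$, a transposition of two vertices of the same parity rescales every minor by $-1$; hence a permutation acting by $\omega$ on the even and $\pi$ on the odd vertices satisfies $c(\omega,\pi)=\sgn(\omega)\sgn(\pi)$. Second, I consider the reversal $\rho\colon i\mapsto n+1-i$: reversing all $n$ columns sends the minor on positions $p_1<\dots<p_{d+1}$ to the increasing minor on $\{n+1-p_{d+1}<\dots<n+1-p_1\}$ up to the sign of the reversal of $d+1$ entries, so $c(\rho)=(-1)^{\binom{d+1}{2}}=(-1)^{d/2}$ since $d$ is even. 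That $\rho$ is a combinatorial automorphism follows from \Cref{cor:gale}, the facet-parity condition being preserved because facets have the even cardinality $d$.

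Now the cases assemble. For $n=d+1$ the polytope is a simplex, \eqref{eq:matr} is square with a single minor equal to its determinant, and $c(g)=\sgn(g)$, giving $\convperms_n^d=A_n$ in $i)$. For $n=d+2$ we have $\Aut(P)=(S_{n/2}\times S_{n/2})\rtimes\mb Z/2$ by \Cref{thm:cyclaut}; here $n$ is even, so $\rho$ exchanges even and odd vertices and realises the nontrivial wreath element, whence $c(\omega,\pi,\tau^\varepsilon)=\sgn(\omega)\sgn(\pi)(-1)^{\varepsilon d/2}$. When $\tfrac d2$ is even this reduces to $\sgn(\omega)\sgn(\pi)$, with kernel $(A_n\cap(S_{n/2}\times S_{n/2}))\rtimes\mb Z/2$, giving $ii)$; when $\tfrac d2$ is odd it is exactly the map $\phi$ of the statement, giving $iii)$. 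For $n\geq d+3$ we have $\Aut(P)=\mb D_n$; a direct computation shows the rotation $r\colon i\mapsto i+1$ preserves positivity (a minor using the wrapped-around first column acquires the sign $(-1)^d=1$), so $\mb Z/n\subseteq\convperms_n^d$, and since $[\mb D_n:\mb Z/n]=2$ it remains only to test the single reflection $\rho$: as $c(\rho)=(-1)^{d/2}$ is $+1$ exactly when $\tfrac d2$ is even, we get $\convperms_n^d=\mb D_n$ in that case ($iv)$) and $\convperms_n^d=\mb Z/n$ otherwise ($v)$).

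The main obstacle is not the sign arithmetic but pinning down the \emph{concrete} permutations realising Kaibel's abstract generators and verifying that they generate the respective groups: specifically, that $\rho$ represents the wreath $\mb Z/2$ for $n=d+2$ and is a reflection generating $\mb D_n$ for $n\geq d+3$, and that the same-parity transpositions together with $\rho$ exhaust $(S_{n/2}\times S_{n/2})\rtimes\mb Z/2$. Establishing these, alongside the well-definedness of the common-sign homomorphism $c$, is the heart of the argument; once in place, all five cases follow by evaluating $c$ on generators.
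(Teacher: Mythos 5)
Your proposal is correct and takes essentially the same route as the paper: both arguments evaluate the sign change on the same concrete generators (same-parity transpositions via \Cref{lmm:sign switch}, the order-reversing permutation with sign $(-1)^{d/2}$, and the rotation, which preserves positivity since $d$ is even) and then read off the stabiliser case by case from \Cref{thm:cyclaut}. The only presentational difference is that you package the computation as an explicit homomorphism $c\colon \Aut(P)\to\{\pm 1\}$, a fact the paper leaves implicit in its proof and records afterwards as \Cref{cor:ev dim conv pres}.
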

\begin{proof}
        Let again $x_1,\dots,x_n$ be such that \eqref{eq:matr} is positive. We set $P = \convhull(x_1, \dots, x_n)$.
    \begin{itemize}
        \item $i)$ is still clear from \Cref{thm:cyclaut} as the determinant is an alternating map (again, $P$ is a simplex).
    \item For $ii)$ we need to adapt the proof of \Cref{thm:cyclaut}. Let $s \in S_n$ denote the order-reversing permutation. Since $\frac{d}{2}$ transpositions of columns turn any $d+1\times d+1$ submatrix of $s(X)$ into a submatrix of $X$, $s$ preserves the sign of minors if $\frac{d}{2}$ is even and flips it if $\frac{d}{2}$ is odd. In the first case, going through the proof of \Cref{thm:cyclaut} and using \Cref{lmm:sign switch}, we obtain the semi-direct product $(A_n \cap S_{\frac{n} 2}\times S_{\frac{n} 2}) \rtimes \mb Z/2$. In the second case it is more difficult to describe the subgroup; we can define it as the kernel of the map $\phi: (S_{\frac{n} 2}\times S_{\frac{n} 2}) \rtimes \mb Z/2 \to \{-1,1\}$ that maps $(\omega,\pi,\tau)$ to $\sgn(\omega)\sgn(\pi)\gamma(\tau)$ where $\gamma$ maps $\tau$ to $-1$.
   \item For $iv)$ we use that, again by \Cref{thm:cyclaut}, $\Aut(P)=\mb D_n$. Let $r$ be rotation by $1$ and $s$ the order-reversing permutation. They generate $\mb D_n$ and since $\frac{d}{2}$ is even, both of them preserve positive minors.
   \item Similarly, if $\frac{d}{2}$ is odd, then $s$ will flip the signs of all minors. Thus, since $srs = r^{-1}$, we just obtain the subgroup generated by $r$ in this case.
    \end{itemize}
\end{proof}

For case v), compare \cite[Remark~3.3]{postnikov2006}.

\begin{corollary}\label{cor:ev dim conv pres}
    For even $d$, every automorphism of a cyclic $d$-polytope with $n$ vertices preserves the property that the maximal minors of \eqref{eq:matr} have constant sign.
\end{corollary}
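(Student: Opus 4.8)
The plan is to prove the following sharper statement, from which the corollary is immediate: if we start from a canonical labeling $x_1,\dots,x_n$ for which \eqref{eq:matr} is positive, then every $\pi\in\Aut(P)$ rescales \emph{all} maximal minors of \eqref{eq:matr} by one common sign $\epsilon(\pi)\in\{\pm1\}$. Scaling every minor by the same sign sends a constant-sign matrix to a constant-sign matrix, so this suffices. The key reduction is that, for positive $X$, the sign of the maximal minor of $\pi(X)$ indexed by a $(d+1)$-subset $I$ equals the sign $\epsilon_\pi(I)$ of the permutation that sorts $(\pi(i))_{i\in I}$ into increasing order; by \Cref{cor:conv pres is aut} this is a purely combinatorial quantity, independent of the actual points. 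Thus I only have to prove that $\epsilon_\pi(I)$ does not depend on $I$.

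First I would observe that $G:=\{\pi\in\Aut(P):\epsilon_\pi(\cdot)\text{ is constant in }I\}$ is a subgroup of $\Aut(P)$. Expanding a maximal minor of $\pi_1(\pi_2(X))$ yields a cocycle relation of the form $\epsilon_{\pi_1\pi_2}(I)=\epsilon_{\pi_1}(I)\,\epsilon_{\pi_2}(f_{\pi_1}(I))$, where $f_{\pi_1}$ is the induced bijection on $(d+1)$-subsets. Constancy of $\epsilon_\pi$ is visibly preserved under this product (the value becomes the product of the two constants) and, since $f_\pi$ is a bijection, under inverses as well. Hence $G$ is a subgroup, and it contains $\convperms^d_n$, whose elements satisfy $\epsilon_\pi\equiv+1$.

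It then remains, in each case where $\convperms^d_n$ is a \emph{proper} subgroup of $\Aut(P)$, to exhibit a single element of $G$ lying outside $\convperms^d_n$: comparing \Cref{prop:convex autos even} with \Cref{thm:cyclaut} shows this index is always $2$, so one such element forces $G=\Aut(P)$ by order-counting. For $n=d+1$ there is a single maximal minor, so $G=\Aut(P)=S_n$ trivially. For $n=d+2$, a transposition of two even (or two odd) columns flips every minor by the same sign by \Cref{lmm:sign switch} and hence lies in $G\setminus\convperms^d_n$. For $n\ge d+3$ with $\tfrac d2$ odd, the order-reversing permutation $s$ reverses each selected $(d+1)$-tuple and so scales every minor by the constant $(-1)^{\binom{d+1}{2}}$, again landing in $G\setminus\convperms^d_n$; when $\tfrac d2$ is even we already have $\convperms^d_n=\mb D_n=\Aut(P)$, and there is nothing to check.

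The place where evenness of $d$ is essential, and the main point to get right, is the cyclic rotation $r\colon i\mapsto i+1$ occurring for $n\ge d+3$. Its sorting sign on $I$ is trivial unless $n\in I$, in which case wrapping $n$ around to $1$ costs $d$ transpositions; the resulting factor $(-1)^{d}$ equals $+1$ precisely because $d$ is even, so $r$ scales all minors uniformly (indeed by $+1$). For odd $d$ this breaks down, and \Cref{ex:counter convex} exhibits exactly an automorphism whose induced sign depends on $I$, which is why the statement is special to even $d$. I expect the only genuine bookkeeping to be verifying the cocycle relation and the sorting-sign computations for $r$ and $s$; the rest is the index-$2$ counting already prepared by \Cref{prop:convex autos even}.
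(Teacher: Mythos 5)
Your argument is correct, and its mathematical core is the same as the paper's: both rest on the sharper claim that every combinatorial automorphism rescales \emph{all} maximal minors by one common sign, and both verify this with exactly the inputs of \Cref{prop:convex autos even} — \Cref{lmm:sign switch} for transpositions inside $S_{n/2}\times S_{n/2}$, the sorting sign $(-1)^{d/2}$ of the order-reversing permutation, and the sorting sign $(-1)^{d}$ of the rotation on index sets containing $n$. Where you differ is in the packaging. The paper's proof is a one-line pointer (``going through the proof of \Cref{prop:convex autos even} again''), which tacitly uses that the property of scaling all minors by a uniform sign is closed under composition of automorphisms; you make that closure explicit via the cocycle relation $\epsilon_{\pi_1\pi_2}(I)=\epsilon_{\pi_1}(I)\,\epsilon_{\pi_2}(f_{\pi_1}(I))$, and then exploit the observation that $[\Aut(P):\convperms^d_n]\le 2$ in every even-$d$ case, so that beyond the automatic inclusion $\convperms^d_n\subseteq G$ you only need one sign-flipping witness per case. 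This buys a cleaner logical structure (no need to track every generator and every product), at the cost of some group-order bookkeeping; the paper's route is shorter given that \Cref{prop:convex autos even} has just been proved. Two small remarks: your appeal to \Cref{cor:conv pres is aut} to justify that $\epsilon_\pi(I)$ is purely combinatorial is misplaced — that is immediate from its definition as a sorting sign once $X$ is positive; what you actually need from the surrounding theory is the identification of $\Aut(P)$ with a concrete group of vertex permutations, i.e.\ \Cref{thm:cyclaut}. Also, like the paper, you argue only for positive $X$ while the statement concerns constant sign; the reduction (negate one coordinate of every $x_i$ to pass from all-negative to all-positive minors without changing the face lattice) deserves a word, but the original glosses over it as well, so this is not a gap relative to the paper's own standard.
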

\begin{proof}
    Going through the proof of \Cref{prop:convex autos even} again, we see that any combinatorial automorphism either preserves or flips the sign of all maximal minors.
\end{proof}

The following result adds further motivation to our interest in $C^d_n$ and explains why we call $\ourfeatures^d$ the ring of volume invariants.

\begin{proposition}\label{prop:eq rel on pwl}
For all $n \geq d+1$ there is a non-empty Zariski open subset $O$ of $\R^{d\times n}$
 such that for all $X\in O$, $\sigma \in S_n$:
 $$\langle S(X),\signedvolume_d\rangle=\langle S(\sigma.X), \signedvolume_d\rangle
 \textrm{ if and only if }
 \sigma \in \convperms^d_n.$$
\end{proposition}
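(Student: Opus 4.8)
The plan is to convert the statement into a question about polynomial identities. For $\sigma\in S_n$ set
\[
p_\sigma(x_1,\dots,x_n):=\homomor^d_n(\signedvolume_d)(x_1,\dots,x_n)-\homomor^d_n(\signedvolume_d)(x_{\sigma(1)},\dots,x_{\sigma(n)}),
\]
which is an honest polynomial by \Cref{def:Hnd_via_sig} and which agrees with $\langle S(X),\signedvolume_d\rangle-\langle S(\sigma.X),\signedvolume_d\rangle$ on every $X\in\pwlinear^d_n$. I would then prove the two implications separately: (a) $p_\sigma\equiv 0$ for $\sigma\in\convperms^d_n$, and (b) $p_\sigma\not\equiv 0$ for $\sigma\notin\convperms^d_n$. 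Granting both, I put $O:=\bigcap_{\sigma\notin\convperms^d_n}\{X:p_\sigma(X)\neq 0\}$. This is a finite intersection of complements of proper hypersurfaces in the irreducible space $\R^{d\times n}$, hence a non-empty Zariski-open set, and at every $X\in O$ the asserted equality of signed volumes holds exactly when $\sigma\in\convperms^d_n$.

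For direction (a) I would use the volume interpretation from the introduction. If \eqref{eq:matr} is positive and $\sigma\in\convperms^d_n$, then $\sigma.X$ is again positive by definition of $\convperms^d_n$, so both $X$ and $\sigma.X$ are convex paths spanning the same polytope $P=\convhull(x_1,\dots,x_n)$. By \cite[Theorem~3.4]{amendolaleemeroni23} both signed volumes equal $\operatorname{vol}(P)$, so $p_\sigma$ vanishes on the positive locus. That locus is a non-empty Euclidean-open, hence Zariski-dense, subset of $\R^{d\times n}$, and a polynomial vanishing on it vanishes identically; thus $p_\sigma\equiv 0$.

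Direction (b) is the crux. Assume $\sigma\notin\convperms^d_n$ and, towards a contradiction, that $p_\sigma\equiv 0$. Then on the positive locus the signed volume of $\sigma.X$ equals $\operatorname{vol}(P)=\operatorname{vol}(\convhull(\sigma.X))$ for every positive $X$. The geometric input I would establish or invoke (extending \cite[Theorem~3.4]{amendolaleemeroni23} and \cite{ANudelman_1975}) is that the signed volume of any piecewise linear path is trapped in $[-\operatorname{vol}(\text{hull}),\operatorname{vol}(\text{hull})]$, with the upper value $+\operatorname{vol}(\text{hull})$ attained precisely for correctly oriented convex paths, i.e.\ exactly when \eqref{eq:matr} for $\sigma.X$ is positive. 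Hence $\sigma.X$ would be positive for all positive $X$, forcing $\sigma\in\convperms^d_n$, a contradiction. For positive $X$ and $\sigma\notin\convperms^d_n$ the matrix $\sigma.X$ has all maximal minors nonzero but not all positive; two subcases then arise. When $d$ is even and $\sigma$ is an orientation-reversing automorphism, all maximal minors of $\sigma.X$ are negative (cf.\ \Cref{cor:ev dim conv pres}); applying a single coordinate reflection turns $\sigma.X$ into a positive matrix of a congruent polytope and negates the determinant-valued functional $\signedvolume_d$ of \eqref{eq:signed vol}, so $\langle S(\sigma.X),\signedvolume_d\rangle=-\operatorname{vol}(P)$ and $p_\sigma=2\,\homomor^d_n(\signedvolume_d)\not\equiv 0$. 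In the remaining (and by far more frequent) cases $\sigma.X$ has maximal minors of both signs, so it is not convex even up to orientation and the equality case above is strict.

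The main obstacle is exactly this last, mixed-sign, case: I must rule out an accidental cancellation making $\langle S(\sigma.X),\signedvolume_d\rangle\equiv\operatorname{vol}(P)$ for a non-convex ordering. I expect to handle it by the inequality-and-equality characterization just described, whose proof in arbitrary dimension is the technical heart; an alternative, more algebraic route would expand $\homomor^d_n(\signedvolume_d)=\sum_J c_J\Delta_J$ in the maximal minors $\Delta_J$ of \eqref{eq:matr} (which is possible since the functional is translation-invariant and alternating of degree $d$), note that column permutation sends $\Delta_J(\sigma.X)$ to $\pm\Delta_{J'}(X)$, and use linear independence of the maximal minors as polynomials to reduce $p_\sigma\equiv 0$ to the statement that $\sigma$ permutes the coefficient pattern $\{c_J\}$ back to itself with the correct signs. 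Matching that combinatorial condition with membership in $\convperms^d_n$, via Gale's evenness criterion (\Cref{cor:gale}) describing the convex orderings, would then close direction (b); controlling the coefficients $c_J$ well enough to exclude spurious symmetries is where the real work lies.
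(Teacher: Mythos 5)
Your overall scaffolding is exactly the paper's: you reduce the statement to showing that the polynomial $p_\sigma$ vanishes identically for $\sigma\in\convperms^d_n$ and is not identically zero for $\sigma\notin\convperms^d_n$, and then take $O$ to be the (finite) intersection of the complements of the proper zero loci inside the irreducible space $\R^{d\times n}$; your direction (a) is also the paper's argument verbatim (Zariski density of the positive locus plus \cite[Theorem 3.4]{amendolaleemeroni23}). The genuine gap is direction (b), and you flag it yourself. Everything there hinges on an extremal claim that you neither prove nor can cite: that the signed volume of \emph{any} piecewise linear path lies in $[-\operatorname{vol}(\mathrm{hull}),+\operatorname{vol}(\mathrm{hull})]$, with the maximum attained exactly for positively ordered convex paths. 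As stated, this claim is false: take ten distinct points forming a large pentagon and a slightly smaller concentric pentagon, and let the path traverse the outer pentagon counterclockwise and then the inner one; the winding number is $2$ inside the inner pentagon, so the signed area is close to twice the area of the convex hull. Any correct version must use that the points of $\sigma.X$ are in convex (cyclic) position, and proving such an equality characterization in every dimension is, as you concede, ``the technical heart'' --- i.e.\ it carries essentially the full difficulty of the proposition, so deferring to it leaves the proof incomplete. The fallback route via an expansion of $H^d_n(\signedvolume_d)$ in maximal minors has the same status: the coefficient control you would need is precisely what is not done.

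The paper closes this step with a much cheaper, purely local argument that you should compare with. Since non-vanishing of $p_\sigma$ needs only one witness, fix a single positive $X$ such that $\sigma.X$ is not positive; by general position some maximal minor of $\sigma.X$, with column index set $J$ say, is strictly negative. Choose a triangulation $\mc S$ of $P=\convhull(x_1,\dots,x_n)$ containing the simplex with index set $J$, and let $\signedvolume_d'$ be the polynomial obtained by summing, over the simplices of $\mc S$, the maximal minors of \eqref{eq:matr} with the corresponding (increasingly sorted) column indices. On the positive locus all these summands are positive and $\mc S$ triangulates the hull, so $\signedvolume_d'$ agrees there with $H^d_n(\signedvolume_d)$, hence --- by the same density argument you already use in direction (a) --- they are equal as polynomials. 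Evaluating this identity at $\sigma.X$, the summand indexed by $J$ is exactly the negative minor, from which the paper concludes $\langle S(\sigma.X),\signedvolume_d\rangle < d!\operatorname{vol}(P)=\langle S(X),\signedvolume_d\rangle$, i.e.\ $p_\sigma(X)>0$. In other words, instead of a global ``equality holds iff the ordering is convex'' theorem, the paper trades the problem for one evaluation of a well-chosen triangulation representation of the signed-volume polynomial at one configuration; this one-witness trick is the idea missing from your proposal.
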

Here, we identify a piecewise linear path $X$ with its ordered set of control points, i.e., a $d\times n$ matrix $X$.
\begin{proof}
The implication $\Leftarrow$ holds for all $X$ as it holds for the Zariski dense subset of $X$ with \eqref{eq:matr} positive, as discussed in the introduction. Thus it suffices to show the converse.\par
Let us denote the vanishing locus of the polynomial $$\langle S(X), \signedvolume_d \rangle-\langle S(\sigma.X), \signedvolume_d\rangle = H^d_n(\signedvolume_d) - \sigma.H^d_n(\signedvolume_d)$$ in $\mb R^{d \times n}$ by $Z_\sigma$ for $\sigma \in S_n$ and set $Z := \bigcup_{\sigma \in S_n \backslash \convperms^d_n} Z_\sigma$. This is the locus of $X$ violating the implication $\Rightarrow$. As it is closed, we only need to show that $Z$ is not the whole space. Then we can choose $O:=\mb R^{d \times n} - Z$.\par
Since $\mb R^{d \times n}$ is irreducible, it suffices to show that $Z_\sigma \not= \mb R^{d\times n}$ for all $\sigma \notin \convperms^d_n$. Thus, let $\sigma \in S_n \backslash \convperms^d_n$. Then we can choose $X$ such that \eqref{eq:matr} is positive for $X$, but not for $\sigma.X$. We will now argue by contraposition that the impliciation $\Rightarrow$ is true for $X$.\par
Let $x_1,\dots,x_n$ denote the columns of $X$. Set $P= \convhull(x_1,\dots,x_n)$. Then the matrix
\begin{equation*}
 \begin{pmatrix}
 1 & \dots & 1\\
 x_{\sigma^{-1}(1)} & \dots & x_{\sigma^{-1}(n)}
 \end{pmatrix}
\end{equation*}
has a negative maximal minor. There is some triangulation $\mc S$ of $P$ containing the simplex corresponding to the index set of this minor. To $\Delta \in \mc S$ we associate the indices of its vertices $\{ i^\Delta_1,\dots,i^\Delta_{d+1}\}, \ i_1 < \dots < i_{d+1}$. Then we consider
$$\signedvolume_d'(X):= \sum_{\Delta \in \mc S} \det\begin{pmatrix}1&\dots&1\\x_{i^\Delta_1}&\dots&x_{i^\Delta_{d+1}}\end{pmatrix},$$
Since $\Delta$ is a triangulation and since all determinants appearing in the sum are positive if \eqref{eq:matr} is positive, $\signedvolume_d'$ agrees with $\langle S(X), \signedvolume_d\rangle$ on the Zariski dense subset of $X$ with \eqref{eq:matr} positive and thus $\signedvolume_d'$ and $\langle S(X), \signedvolume_d\rangle$ are identical. But by construction $\signedvolume'_d(\sigma.X)$ is strictly bounded by the volume of $P$ (since at least one minor appearing in the sum will be negative) and so
\begin{equation*}
    \langle S(X), \signedvolume_d\rangle - \langle S(\sigma.X),\signedvolume_d \rangle > 0.\qedhere
\end{equation*}

\end{proof}

In particular, away from some exceptional closed set, we have the implication
$$\langle S(X), \signedvolume_d\rangle = \langle S(\sigma.X), \signedvolume_d\rangle \Rightarrow \forall \ w \in \ourfeatures^d: \ \langle S(X),w \rangle = \langle S(\sigma.X), w \rangle $$
for all $X$.
\section{Investigating the ring of volume invariants}\label{sec:ring_of_invariants}

In the following we write $\ourfeatures^d_n:=\ourfeatures^d_n(\convperms^d_n)$ for simplicity. Given the case distinction in Proposition \ref{prop:convex autos even}, we will treat the cases $n\geq d+3$ simultanuously and put 
$$\ourfeatures_{\geq d+3}^d:=\bigcap_{n\geq d+3} \ourfeatures_n^d$$
so that we have
$$\ourfeatures^d=\ourfeatures_{d+1}^d\cap \ourfeatures_{d+2}^d \cap \ourfeatures_{\geq d+3}^d.$$

Note that the kernel $\mc I(\pwlinear^d_{d+2})$ of $\homomor^d_{d+2}$ is contained in $\ourfeatures^d_{d+1} \cap \ourfeatures^d_{d+2}$.
We can write
\begin{equation*}
 \ourfeatures^d=\frac{\ourfeatures^d}{\ourfeatures^d \cap \mathcal{I}(\pwlinear^d_{d+2})}\oplus\big(\ourfeatures^d_{\geq d+3}\cap\mathcal{I}(\pwlinear^d_{d+2})\big)
\end{equation*}

While we can give a description of $\ourfeatures^d_{\geq d+3}$, the problem for $\ourfeatures^d_{d+1}$ and $\ourfeatures^d_{d+2}$ is more difficult. We give examples and a conjecture instead, based on computations for low values of $d$:
\begin{conjecture}\label{conj:only vol for d+2}
$$\ourfeatures^d_{d+2}/\mathcal{I}(\pwlinear^d_{d+2}) \cong \mb R[H^d_{d+2}(\signedvolume_d)]\subseteq \mc S^d[x_1,\dots,x_{d+2}]$$
i.e.\ the subalgebra of $\mc S^d[x_1,\dots,x_{d+2}]$ generated by $H^d_{d+2}(\signedvolume_d)$. Equivalently,
$$\ourfeatures^d_{d+2}=\spann\{(\signedvolume_d)^{\shuffle k},k\geq 0\}\oplus \mathcal{I}(\pwlinear_{d+2}^d).$$
\end{conjecture}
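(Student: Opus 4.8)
The inclusion ``$\supseteq$'' and the directness of the sum are immediate: $\signedvolume_d$ is a volume invariant, so all its shuffle powers $(\signedvolume_d)^{\shuffle k}$ lie in the subalgebra $\ourfeatures^d_{d+2}$, while $\mathcal{I}(\pwlinear_{d+2}^d)\subseteq \ourfeatures^d_{d+2}$ holds by definition; and the sum is direct because $H^d_{d+2}\big((\signedvolume_d)^{\shuffle k}\big)=\big(H^d_{d+2}(\signedvolume_d)\big)^k$, where the signed volume polynomial $V:=H^d_{d+2}(\signedvolume_d)$ is a nonconstant homogeneous polynomial of degree $d$, hence non-nilpotent, so its powers are linearly independent and span a genuine polynomial ring $\mb R[V]$. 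Applying $H^d_{d+2}$ and using $\ourfeatures^d_{d+2}/\mathcal{I}(\pwlinear_{d+2}^d)\cong H^d_{d+2}(\ourfeatures^d_{d+2})=\mc S^d[x_1,\dots,x_{d+2}]\cap \mb R[x_1,\dots,x_{d+2}]^{\convperms^d_{d+2}}$, one sees that the whole statement is equivalent to the reverse inclusion: \emph{every $\convperms^d_{d+2}$-invariant signature polynomial in $d+2$ control points is a polynomial in $V$.}

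My plan is to establish this by a graded dimension count. By translation invariance every signature polynomial lies in the subring $\mb R[a_1,\dots,a_{d+1}]$ generated by the increments $a_i=x_{i+1}-x_i$, on which $\convperms^d_{d+2}$ acts linearly, since permuting control points sends each increment to an integral signed sum of increments. Grading by word length, it then suffices to show that the degree-$m$ piece $\mc S^d_m\cap \mb R[a_1,\dots,a_{d+1}]^{\convperms^d_{d+2}}_m$ is one-dimensional when $d\mid m$ (spanned by $V^{m/d}$) and zero otherwise. For the explicit handle on $\mc S^d_m$ I would use the factorization of $H^d_n$ through the level-$d$ quasi-symmetric functions in $a_1,\dots,a_{d+1}$ discussed in \Cref{sec:Hnd}, together with the recursion \eqref{eq:Hnd_recursive1}; for the group I would use the completely explicit descriptions of $\convperms^d_{d+2}$ from \Cref{prop:convex autos odd} and \Cref{prop:convex autos even}, in which the group essentially permutes even- and odd-indexed control points separately, subject to a reflection and parity constraints. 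The invariant subspace can then be extracted degree by degree as the kernel of $p\mapsto(\sigma.p-p)_{\sigma}$ restricted to $\mc S^d_m$, with $\sigma$ ranging over a generating set of $\convperms^d_{d+2}$.

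The main obstacle is that $\mc S^d[x_1,\dots,x_{d+2}]$ is \emph{not} stable under the $\convperms^d_{d+2}$-action: were it stable, the intersection would equal the full invariant ring $(\mc S^d)^{\convperms^d_{d+2}}$, of Krull dimension $d(d+1)$ for $d\geq 2$, contradicting the asserted answer $\mb R[V]$ of Krull dimension $1$; concretely, already a cyclic shift of the control points turns a signature polynomial into a non-signature polynomial, as one checks directly in small cases. Consequently one cannot compute the invariants by a Molien-series argument applied to $\mc S^d$, and is instead forced to control the intersection of two subalgebras of $\mb R[a_1,\dots,a_{d+1}]$ that are both large — of Krull dimension $d(d+1)$ for $d\geq 2$ — yet meet only in the one-dimensional $\mb R[V]$. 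This near-transversality is not explained by any general principle I am aware of and must be read off from the combinatorics of which level-$d$ quasi-symmetric functions happen to be $\convperms^d_{d+2}$-invariant, whose bookkeeping grows rapidly with $d$; this is presumably why the statement is left as a conjecture verified only for small $d$. A uniform proof would likely require either a structural reason for the transversality — for instance a one-parameter degeneration of the increments under which the leading forms of invariant signature polynomials are forced onto powers of $V$ — or an inductive argument exploiting the restriction maps relating the $(d+2)$-point case to the simplex case $\ourfeatures^d_{d+1}$ and to the already-understood rings $\ourfeatures^d_{\geq d+3}$.
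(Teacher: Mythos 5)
The first thing to note is that the paper does not prove this statement: it is \Cref{conj:only vol for d+2}, presented explicitly as a conjecture ``based on computations for low values of $d$'' (a \textsc{Macaulay 2} verification in low degree), so there is no reference proof to compare yours against. Judged on its own terms, your proposal is incomplete rather than incorrect, and you say so yourself. The parts you do establish are fine: the containment $\spann\{(\signedvolume_d)^{\shuffle k}\}+\mathcal{I}(\pwlinear^d_{d+2})\subseteq\ourfeatures^d_{d+2}$; the directness of the sum, since $H^d_{d+2}$ is a shuffle-algebra homomorphism into an integral domain and $V:=H^d_{d+2}(\signedvolume_d)$ is a nonzero homogeneous polynomial, so its powers $V^k$ are linearly independent; and the reformulation of the conjecture as the equality $\mc S^d[x_1,\dots,x_{d+2}]\cap\mb R[x_1,\dots,x_{d+2}]^{\convperms^d_{d+2}}=\mb R[V]$.

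The genuine gap is that this reformulated statement --- which is the entire content of the conjecture --- is never proved. Your graded dimension count is announced, not executed: you do not compute the dimension of the degree-$m$ piece of the intersection for any $m$ or any $d$, and you candidly identify the obstruction that prevents this from being routine, namely that $\mc S^d[x_1,\dots,x_{d+2}]$ is not stable under the $\convperms^d_{d+2}$-action, so the intersection is not the invariant ring of a finite group acting on a finitely generated algebra and no Molien- or Reynolds-type argument applies. That diagnosis is sound and consistent with the authors' decision to leave the statement open. Two cautions on your proposed escape routes. First, the suggested induction through the restriction map to the simplex case $\ourfeatures^d_{d+1}$ looks unpromising: the paper's computation immediately following the conjecture exhibits, for $d=3$ and $n=4$, eighteen linearly independent invariants in degree at most $6$ (six in degree $5$ alone), so the $(d+1)$-point invariant ring is far richer than $\mb R[V]$, and restricting to it cannot force invariants onto powers of $V$. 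Second, your ``concrete check'' via a cyclic shift is only relevant when that shift lies in $\convperms^d_{d+2}$; this holds for even $d$ (moving a column past $d$ others preserves the sign of the maximal minors) but fails for odd $d$, where non-stability would have to be exhibited using elements of $A_{d+2}\cap(S_{(d+1)/2}\times S_{(d+3)/2})$ instead.
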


\begin{example}
    A computation using \textsc{Macaulay 2} shows that the vector space of invariants of degree $\leq 6$ in $\ourfeatures^3_{4}$ is spanned by $18$ elements, one in degree $3$ (the signed volume), $6$ in degree $5$ and $11$ in degree $6$ (including the shuffle square of the signed volume). Here are two examples:
    \begin{itemize}
    \item $w_1:=\texttt{12333} + \texttt{13233} -\frac{2}{3} \cdot \texttt{13323} - \frac{4}{3} \cdot \texttt{13332} - \texttt{21333} - \texttt{23133} + \frac{2}{3} \cdot \texttt{23313} + \frac{4}{3} \cdot \texttt{23331} + \frac{5}{3} \cdot \texttt{31323} - \frac{2}{3} \cdot \texttt{31332} - \frac{5}{3} \cdot \texttt{32313} + \frac{2}{3} \cdot \texttt{32331} + \texttt{33132} - \texttt{33231} + \texttt{33312} - \texttt{33321}$
    \item $w_2 :=
         - \texttt{123333} -3 \cdot \texttt{132333} + 4 \cdot \texttt{133233} + \texttt{213333} + 3 \cdot \texttt{231333} +\\
        -4 \cdot \texttt{233133} + \texttt{312333} -2 \cdot \texttt{313233}
        - \texttt{321333} +\\
        2 \cdot \texttt{323133} + 2 \cdot \texttt{331323}  -4 \cdot \texttt{331332}  -2 \cdot \texttt{332313} +\\
        4 \cdot \texttt{332331}  -\texttt{333123} + 3 \cdot \texttt{333132} + \texttt{333213} +\\
        -3 \cdot \texttt{333231} + \texttt{333312} - \texttt{333321}$
    \end{itemize}
    Writing $x_i=(x_{i1}, \dots, x_{id})$ and $a_i:= x_{i+1} - x_i$, their images under \eqref{eq:sign map} in the polynomial ring $\mb R[x_1,\dots,x_d]$ are given by
    \begin{small}
    \begin{align*}
             &2\cdot(-3a_{13}^3a_{22}a_{31} + 3a_{12}a_{13}^2a_{23}a_{31} - 4a_{13}^2a_{22}a_{23}a_{31} + 4a_{12}a_{13}a_{23}^2a_{31} \\ 
 &- 4a_{13}a_{22}a_{23}^2a_{31} + 4a_{12}a_{23}^3a_{31} + 3a_{13}^3a_{21}a_{32} - 3a_{11}a_{13}^2a_{23}a_{32} \\ 
 &+ 4a_{13}^2a_{21}a_{23}a_{32} - 4a_{11}a_{13}a_{23}^2a_{32} + 4a_{13}a_{21}a_{23}^2a_{32} - 4a_{11}a_{23}^3a_{32} \\ 
 &- 3a_{12}a_{13}^2a_{21}a_{33} + 3a_{11}a_{13}^2a_{22}a_{33} - 4a_{12}a_{13}a_{21}a_{23}a_{33} + 4a_{11}a_{13}a_{22}a_{23}a_{33} \\ 
 &- 4a_{12}a_{21}a_{23}^2a_{33} + 4a_{11}a_{22}a_{23}^2a_{33} - 2a_{13}^2a_{22}a_{31}a_{33} + 2a_{12}a_{13}a_{23}a_{31}a_{33} \\ 
 &- 4a_{13}a_{22}a_{23}a_{31}a_{33} + 4a_{12}a_{23}^2a_{31}a_{33} + 2a_{13}^2a_{21}a_{32}a_{33} - 2a_{11}a_{13}a_{23}a_{32}a_{33} \\ 
 &+ 4a_{13}a_{21}a_{23}a_{32}a_{33} - 4a_{11}a_{23}^2a_{32}a_{33} - 2a_{12}a_{13}a_{21}a_{33}^2 + 2a_{11}a_{13}a_{22}a_{33}^2 \\ 
 &- 4a_{12}a_{21}a_{23}a_{33}^2 + 4a_{11}a_{22}a_{23}a_{33}^2 - 3a_{13}a_{22}a_{31}a_{33}^2 + 3a_{12}a_{23}a_{31}a_{33}^2 \\ 
 &+ 3a_{13}a_{21}a_{32}a_{33}^2 - 3a_{11}a_{23}a_{32}a_{33}^2 - 3a_{12}a_{21}a_{33}^3 + 3a_{11}a_{22}a_{33}^3)
        \end{align*}
        \end{small}
        and 
        \begin{small}
    \begin{align*}
            &24\cdot (-a_{13}^4a_{22}a_{31} + a_{12}a_{13}^3a_{23}a_{31} - 2a_{13}^3a_{22}a_{23}a_{31} + 2a_{12}a_{13}^2a_{23}^2a_{31} \\ 
 &+ a_{13}^4a_{21}a_{32} - a_{11}a_{13}^3a_{23}a_{32} + 2a_{13}^3a_{21}a_{23}a_{32} - 2a_{11}a_{13}^2a_{23}^2a_{32} \\ 
 &- a_{12}a_{13}^3a_{21}a_{33} + a_{11}a_{13}^3a_{22}a_{33} - 2a_{12}a_{13}^2a_{21}a_{23}a_{33} + 2a_{11}a_{13}^2a_{22}a_{23}a_{33} \\ 
 &- a_{13}^3a_{22}a_{31}a_{33} + a_{12}a_{13}^2a_{23}a_{31}a_{33} + a_{13}^3a_{21}a_{32}a_{33} - a_{11}a_{13}^2a_{23}a_{32}a_{33} \\ 
 &- a_{12}a_{13}^2a_{21}a_{33}^2 + a_{11}a_{13}^2a_{22}a_{33}^2 + a_{13}^2a_{22}a_{31}a_{33}^2 - a_{12}a_{13}a_{23}a_{31}a_{33}^2 \\ 
 &+ 2a_{13}a_{22}a_{23}a_{31}a_{33}^2 - 2a_{12}a_{23}^2a_{31}a_{33}^2 - a_{13}^2a_{21}a_{32}a_{33}^2 + a_{11}a_{13}a_{23}a_{32}a_{33}^2 \\ 
 &- 2a_{13}a_{21}a_{23}a_{32}a_{33}^2 + 2a_{11}a_{23}^2a_{32}a_{33}^2 + a_{12}a_{13}a_{21}a_{33}^3 - a_{11}a_{13}a_{22}a_{33}^3 \\ 
 &+ 2a_{12}a_{21}a_{23}a_{33}^3 - 2a_{11}a_{22}a_{23}a_{33}^3 + a_{13}a_{22}a_{31}a_{33}^3 - a_{12}a_{23}a_{31}a_{33}^3 \\ 
 &- a_{13}a_{21}a_{32}a_{33}^3 + a_{11}a_{23}a_{32}a_{33}^3 + a_{12}a_{21}a_{33}^4 - a_{11}a_{22}a_{33}^4)
        \end{align*}
        \end{small}
    respectively.
\end{example}

Let us now give a description of $\ourfeatures^d_{\geq d+3}$. Let $\mc A$ denote the antipode of the Hopf algebra $\mb R\langle \texttt 1, \dots, \texttt d\rangle$, that is, the map sending a word $w$ to $(-1)^{d+1}w'$ where $w'$ is obtained from $w$ by reversing the order of its letters.

\begin{definition}
We define $\timerevinv^d:=\{w\in \mb R\langle \texttt{1},\dots, \texttt{d}\rangle \ | \ w=\antipode w\}$. This is the subring of $w \in \mb R\langle \texttt{1},\dots, \texttt{d}\rangle$ such that $\langle S(X), w\rangle = \langle S(X^{-1}), w \rangle$ for all paths $X:[0,1] \to \mb R^d$, where $X^{-1}: [0,1]\to \mb R^d,\  t \mapsto X(1-t)$.
Indeed, taking the antipode is the adjoint operation to time reversal (see e.g.\ \cite{preiss2024algebraic}).
\end{definition}

In the following, we will make crucial use of the following theorem (immediately equivalent to \cite[Lemma~5.2]{areasofareas}), which can be seen as a corollary of the Chen-Chow theorem for piecewise linear paths, or a weaker version thereof:

\begin{theorem}[{weak Chen-Chow}]\label{thm:chenchow}
    Let $w, v \in \mb R\langle \mathtt 1, \dots, \mathtt d \rangle$. If $\langle S(X), w\rangle = \langle S(X), v \rangle$ for all piecewise linear paths $X$, then $w=v$.
\end{theorem}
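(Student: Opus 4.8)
The plan is to eliminate the two words in favour of their difference. Set $u := w - v$; then the claim is equivalent to the assertion that the common kernel of all the linear forms $S(X)$, as $X$ ranges over $\bigcup_n \pwlinear^d_n$, is trivial, i.e.\ that $\langle S(X), u\rangle = H^d_n(u)(x_1,\dots,x_n) = 0$ for all $n$ and all control points forces $u = 0$. My first move is to reduce to the case that $u$ is homogeneous. The map $H^d_n$ is graded: a word of length $k$ is sent to a polynomial that is homogeneous of degree $k$ in the increments $a_i := x_{i+1}-x_i$, as one sees from the base case $f_2(\letteri_1\cdots\letteri_k)=\tfrac{1}{k!}a_{i_1}\cdots a_{i_k}$ and the recursion \eqref{eq:Hnd_recursive1}. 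Consequently, rescaling a path $X \mapsto \lambda X$ multiplies the degree-$k$ part of its signature by $\lambda^k$, so that
$$\langle S(\lambda X), u\rangle = \sum_k \lambda^k\,\langle S(X), u_k\rangle$$
for the homogeneous components $u_k$ of $u$. Vanishing for all $\lambda$ forces $\langle S(X), u_k\rangle = 0$ for every $k$ and every $X$, so I may assume $u$ is homogeneous of some degree $k$.

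The key construction is to test $u$ against the single path that is best adapted to length-$k$ words: the piecewise linear path $X$ with exactly $k$ linear segments, whose increments $a^{(1)},\dots,a^{(k)} \in \mb R^d$ I treat as $dk$ free indeterminates. By translation invariance, $\langle S(X), u\rangle$ is a polynomial in these variables, and the hypothesis says this polynomial vanishes identically. Applying Chen's identity (\Cref{prop:chen}) $k-1$ times, and writing $S_j$ for the signature of the $j$-th segment, a single word $w=\letteri_1\cdots\letteri_k$ pairs as
$$\langle S(X), w\rangle \;=\; \langle S_1 \bullet \cdots \bullet S_k,\, w\rangle \;=\; \sum \ \prod_{j=1}^k \langle S_j, w^{(j)}\rangle,$$
the sum running over all factorizations $w = w^{(1)}\cdots w^{(k)}$ into $k$ consecutive, possibly empty, subwords. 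Here each factor $\langle S_j, \letteri_1\cdots\letteri_m\rangle = \tfrac{1}{m!}\,a^{(j)}_{i_1}\cdots a^{(j)}_{i_m}$ is a monomial of degree $|w^{(j)}|$ in the coordinates of $a^{(j)}$ alone.

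The extraction step is then elementary. I single out the \emph{multilinear} part of $\langle S(X), u\rangle$, i.e.\ the part of degree exactly one in each block $a^{(j)}$. Since the blocks have total length $\sum_j |w^{(j)}| = k$ across $k$ factors, the only factorization contributing to the multilinear part is $|w^{(j)}| = 1$ for all $j$, namely $w^{(j)} = \letteri_j$, which contributes the monomial $a^{(1)}_{i_1}\cdots a^{(k)}_{i_k}$. Hence for general homogeneous $u = \sum_w c_w\, w$ the multilinear part of $\langle S(X), u\rangle$ equals $\sum_{w=\letteri_1\cdots\letteri_k} c_w\, a^{(1)}_{i_1}\cdots a^{(k)}_{i_k}$. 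Distinct words index distinct, hence linearly independent, monomials in the $a^{(j)}_i$, so the assumed vanishing forces every $c_w = 0$, i.e.\ $u = 0$.

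The only real subtlety — and the point I would be most careful about — is the bookkeeping in the two reductions above: the passage to homogeneous components and the exact identification of the multilinear coefficient with the word-coefficient of $u$. The geometric content is simply that allotting one segment per letter \emph{tags} each position of a length-$k$ word by its own increment, which is precisely what makes words of length $k$ separable by a single path. I would close by noting that this is the piecewise linear shadow of the full Chen--Chow theorem, and that restricting to $k$ free segments is what renders the argument purely combinatorial rather than requiring the general theory.
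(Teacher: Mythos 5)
Your proof is correct, but it takes a genuinely different route from the paper's: the paper disposes of this statement in a single line, citing the proof of \cite[Lemma~8]{diehl2019invariants} for the fact that the signatures of piecewise linear paths span the dual $(\mb R\langle \mathtt 1, \dots, \mathtt d \rangle_{\leq k})^*$ of the space of words of length $\leq k$, of which the theorem is exactly the dual (point-separating) formulation. Your argument instead proves this from first principles, and every step checks out: the reduction to a homogeneous $u$ of degree $k$ via $X\mapsto\lambda X$ is sound (a polynomial in $\lambda$ vanishing for all $\lambda$ has vanishing coefficients); the expansion of $\langle S(X),w\rangle$ over deconcatenations of $w$ into $k$ consecutive blocks is the iterated form of Chen's identity (\Cref{prop:chen}); and the multilinear extraction is airtight, since with one free segment per letter the only deconcatenation with all blocks of length one tags the word $\letteri_1\cdots\letteri_k$ with the monomial $a^{(1)}_{i_1}\cdots a^{(k)}_{i_k}$, and distinct words yield distinct, hence linearly independent, monomials. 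What the paper's citation buys is brevity; what your proof buys is a complete, elementary, self-contained argument (in effect re-proving the cited lemma) using only the tools of Section 2, namely the explicit signature of a linear segment and Chen's identity. One small streamlining: the homogeneity reduction can be skipped entirely, because the component of $\langle S(X),u\rangle$ that is multilinear in all $k$ blocks can only receive contributions from words of length exactly $k$ in the first place, so testing against a $k$-segment path already isolates the degree-$k$ part of $u$.
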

\begin{proof}
By the proof of \cite[Lemma~8]{diehl2019invariants}, the image of piecewise linear paths under $S$ spans the dual $(\mb R\langle \texttt 1, \dots, d \rangle_{\leq k})^*$ of the vector space of length $\leq k$ words.
\end{proof}

\begin{proposition}\label{prop:inv geq3 odd}
Assume $d$ is odd. Then 
$$\ourfeatures_{\geq d+3}^d=\timerevinv^d$$ if $\frac{d+1}2$ is even, and $\ourfeatures_{\geq d+3}^d=\mb R\langle \texttt{1},\dots, \texttt{d}\rangle$ if $\frac{d+1}{2}$ is odd.
\end{proposition}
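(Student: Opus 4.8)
The plan is to combine the explicit description of $\convperms^d_n$ for $n\geq d+3$ from \Cref{prop:convex autos odd} with the adjointness of the antipode $\antipode$ and time reversal, and then to conclude via weak Chen-Chow (\Cref{thm:chenchow}). First consider the case $\frac{d+1}{2}$ odd. Then \Cref{prop:convex autos odd}(iv) gives $\convperms^d_n=1$ for every $n\geq d+3$, so the $\convperms^d_n$-invariance condition defining $\ourfeatures^d_n$ is vacuous and $\ourfeatures^d_n=\mb R\langle\word{1},\dots,\word{d}\rangle$ for all such $n$; intersecting over $n$ yields $\ourfeatures_{\geq d+3}^d=\mb R\langle\word{1},\dots,\word{d}\rangle$.

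Now assume $\frac{d+1}{2}$ even, so that \Cref{prop:convex autos odd}(iii) gives $\convperms^d_n=\mb Z/2$. Inspecting its proof, the nontrivial element is $\tau=\pi\circ s$, where $\pi$ swaps the outer control points $x_1,x_n$ and $s$ reverses the order of the inner ones; hence $\tau$ is the full order-reversal $x_i\mapsto x_{n+1-i}$. The key observation is that for any $X\in\pwlinear^d_n$ the reordered path $\tau.X$ with control points $x_n,\dots,x_1$ is, up to reparametrization, exactly the time-reversed path $X^{-1}\colon t\mapsto X(1-t)$, so that $S(\tau.X)=S(X^{-1})$. Consequently $w\in\ourfeatures^d_n$ if and only if $\langle S(X),w\rangle=\langle S(X^{-1}),w\rangle$ for all $X\in\pwlinear^d_n$.

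Using that the antipode $\antipode$ is adjoint to time reversal, i.e.\ $\langle S(X^{-1}),w\rangle=\langle S(X),\antipode w\rangle$, this invariance rewrites as $\langle S(X),w-\antipode w\rangle=0$ for all $X\in\pwlinear^d_n$. Taking the intersection over $n\geq d+3$, and noting that every piecewise linear path can be turned into one with arbitrarily many control points by subdividing its segments (inserting a control point on an existing segment leaves the path, and hence its signature, unchanged), the equation $\langle S(X),w-\antipode w\rangle=0$ holds for \emph{all} piecewise linear paths $X$. By weak Chen-Chow (\Cref{thm:chenchow}) this forces $w=\antipode w$, i.e.\ $w\in\timerevinv^d$; conversely every $w\in\timerevinv^d$ satisfies $\langle S(X),w\rangle=\langle S(X^{-1}),w\rangle$ for all paths by the very definition of $\timerevinv^d$, hence lies in each $\ourfeatures^d_n$. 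This gives $\ourfeatures_{\geq d+3}^d=\timerevinv^d$.

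The main obstacle, and really the only genuine point beyond bookkeeping, is the last reduction: weak Chen-Chow is stated for all piecewise linear paths, whereas the invariance is only known for paths with $n\geq d+3$ control points. The subdivision argument is what bridges this gap, and I would state it explicitly so that the hypothesis of \Cref{thm:chenchow} is genuinely met. The remaining care is purely in matching conventions — confirming that $\tau$ as extracted from the proof of \Cref{prop:convex autos odd} is precisely order-reversal and that the $G$-action on $\homomor^d_n(w)$ translates into the signature identity $\langle S(X),w\rangle=\langle S(\tau.X),w\rangle$, exactly as in \Cref{prop:eq rel on pwl}.
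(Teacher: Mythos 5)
Your proposal is correct and follows essentially the same route as the paper's proof: identify $\convperms^d_n$ via \Cref{prop:convex autos odd} (trivial group in the $\frac{d+1}{2}$ odd case; order-reversal $\tau$ in the even case), translate $\tau$ into time reversal $X\mapsto X^{-1}$, use the adjointness of the antipode $\antipode$, and conclude with weak Chen--Chow (\Cref{thm:chenchow}). Your explicit subdivision argument, ensuring that invariance for paths with $n\geq d+3$ control points really does imply the identity $\langle S(X), w-\antipode w\rangle=0$ for \emph{all} piecewise linear paths, is a point the paper leaves implicit, so including it is a welcome clarification rather than a deviation.
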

\begin{proof}
    Let $X$ be a piecewise linear path in $\mb R^d$ with $n\geq d+3$ control points. If $\frac{d+1}{2}$ is odd then $\convperms^d_n$ is trivial by \Cref{prop:convex autos odd}. If $\frac{d+1}{2}$ is even, then the proof of \Cref{prop:convex autos odd} shows that $\convperms^d_n$ is generated by the reflection $\tau$ which inverts the order of the vertices. On piecewise linear paths, this corresponds to $X \mapsto X^{-1}$. Thus, if $w \in \ourfeatures_{\geq d+3}$, then $\langle S(X), \mc A w \rangle = \langle S(X^{-1}), w\rangle = \langle S(X), w\rangle$, concluding the proof by \Cref{thm:chenchow}.
\end{proof}

Note that $\timerevinv$ can be identified as the image of $$\mb R\langle \texttt{1},\dots, \texttt{d}\rangle \to \mb R\langle \texttt{1},\dots, \texttt{d}\rangle, \ w \mapsto w + \mc Aw.$$

\begin{example}
In $d=3$, consider the concatenation square of signed 3-volume
\begin{equation*}
\signedvolume_3^{\bullet 2}=(\texttt{123}+\texttt{231}+\texttt{312}-\texttt{213}-\texttt{132}-\texttt{321})^{\bullet 2},
\end{equation*}
where the concatenation (of words) $\bullet$ is the bilinear non-commutative product on $\mb R\langle\mathtt{1},\dots,\mathtt{d}\rangle$ given by
\begin{equation*}
 \mathtt{i}_1\dots\mathtt{i}_k\bullet\mathtt{i}_{k+1}\dots\mathtt{i}_m=\mathtt{i}_1\dots\mathtt{i}_m
\end{equation*}
We have $\antipode\signedvolume_3^{\bullet 2}=\signedvolume_3^{\bullet 2}$, so 
$\signedvolume_3^{\bullet 2}\in\timerevinv^3=\ourfeatures_{\geq 6}^3$.

Furthermore, $\langle S(X),\signedvolume_3^{\bullet 2}\rangle$ is the integral
\begin{align*}
\int_{0\leq t_1\leq\dots\leq t_6\leq 1}\det(X'(t_1),X'(t_2),X'(t_3))\det(X'(t_4),X'(t_5),X'(t_6))dt_1\dots dt_6&
\end{align*}
so if $X$ has only 4 segments,
there is no choice of $0\leq t_1\leq\dots\leq t_6\leq 1$ such that both determinants are non-zero.
Thus, $\signedvolume_3^{\bullet 2}\in \ourfeatures_{\geq 6}^3\cap\mathcal{I}(\pwlinear_5^3)\subset \ourfeatures^3$.
\end{example}

\begin{definition}
We define $\loopclosureinv^d$ as the subring of $u \in \mb R\langle\mathtt{1},\dots,\mathtt{d}\rangle$ such that
\begin{equation*}
 \langle S(X),u\rangle=\langle S(X\sqcup \{X(1)\to X(0)\}),u\rangle=\langle S(\{X(1)\to X(0)\}\sqcup X),u\rangle,
\end{equation*}
where $\sqcup$ is concatenation of paths
and $\{X(1)\to X(0)\}$ is the linear segment from $X(1)$ to $X(0)$. That is, the elements of $\loopclosureinv^d$ correspond to signature values that are stable
both under closing a path to a loop by concatenating a linear segment to the right (the \textit{right loop closure}),
as well as under closing a path to a loop by concatenating a linear segment to the left (the \textit{left loop closure}). We refer to \cite{loopcl24} for details.
\end{definition}

For example, the signatures of the five paths from \Cref{fig:2polytope} agree on all loop closure invariants
(since the right closure of the first path is the left closure of the second path depicted, and so on).

\begin{proposition}\label{prop:inv_loopclosure_timerev}
Assume $d$ is even. Then 
$$\ourfeatures_{\geq d+3}^d=\loopclosureinv^d\cap\timerevinv^d$$ if $\frac{d}{2}$ is even, and $\ourfeatures_{\geq d+3}^d=\loopclosureinv^d$ if $\frac{d}{2}$ is odd.
\end{proposition}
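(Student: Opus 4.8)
The plan is to reduce everything to the explicit description of $\convperms_n^d$ in \Cref{prop:convex autos even} and then translate the two generators of that group into operations on paths. For $d$ even and $n \geq d+3$, \Cref{prop:convex autos even} gives $\convperms_n^d = \mb D_n = \langle r, s\rangle$ when $\frac d2$ is even and $\convperms_n^d = \mb Z/n = \langle r\rangle$ when $\frac d2$ is odd, where $r$ is the rotation of the control points by one position and $s$ is the order-reversing permutation. Since $w \in \ourfeatures_{\geq d+3}^d$ precisely when $H_n^d(w)$ is $\convperms_n^d$-invariant for every $n \geq d+3$, it suffices to characterise separately the words for which $H_n^d(w)$ is $s$-invariant for all $n$ and those for which it is $r$-invariant for all $n$. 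I claim the former condition is equivalent to $w \in \timerevinv^d$ and the latter to $w \in \loopclosureinv^d$; feeding in whichever generators survive in each case then yields the proposition.

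The time-reversal half runs exactly as in \Cref{prop:inv geq3 odd}. The permutation $s$ sends $\{x_1 \to \dots \to x_n\}$ to $\{x_n \to \dots \to x_1\} = X^{-1}$, so $s$-invariance of $H_n^d(w)$ reads $\langle S(X), w\rangle = \langle S(X^{-1}), w\rangle = \langle S(X), \antipode w\rangle$. If $w = \antipode w$ this holds for every $X$, so $w \in \timerevinv^d$ forces $s$-invariance for all $n$. Conversely, if $H_n^d(w)$ is $s$-invariant for all $n \geq d+3$, then $\langle S(X), w - \antipode w\rangle = 0$ for every piecewise linear path with at least $d+3$ control points; as any piecewise linear path acquires arbitrarily many control points by subdividing its segments without changing its signature, this holds for all piecewise linear paths, and \Cref{thm:chenchow} gives $w = \antipode w$.

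The loop-closure half is the crux. In one direction, let $w \in \loopclosureinv^d$ and fix $X = \{x_1 \to \dots \to x_n\}$ with $n \geq d+3$. The right loop closure of $X$ and the left loop closure of $r.X$ are literally the same loop $L = \{x_1 \to \dots \to x_n \to x_1\}$ (this is exactly the incidence displayed in \Cref{fig:2polytope}), so the two defining equalities of $\loopclosureinv^d$ give $\langle S(X), w\rangle = \langle S(L), w\rangle = \langle S(r.X), w\rangle$, i.e.\ $H_n^d(w)$ is $r$-invariant. In the other direction, assume $H_n^d(w)$ is $r$-invariant for all $n \geq d+3$ and take any $X = \{x_1 \to \dots \to x_m\}$. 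The right closure $X^R = \{x_1 \to \dots \to x_m \to x_1\}$ has $m+1$ control points, and rotating it by $-1$ produces $\{x_1 \to x_1 \to x_2 \to \dots \to x_m\}$, whose leading segment is constant and hence invisible to the signature; deleting it recovers $X$. Invariance under $\langle r\rangle$ (available once $m+1 \geq d+3$, which we arrange by subdividing $X$ first) therefore yields $\langle S(X^R), w\rangle = \langle S(X), w\rangle$, which is right-closure invariance, and the symmetric computation with the left closure and a trailing constant segment gives left-closure invariance. Thus the defining identities of $\loopclosureinv^d$ hold for all piecewise linear paths, which, exactly as in the time-reversal case, is what is needed to conclude $w \in \loopclosureinv^d$.

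The main obstacle is this last equivalence between invariance under the rotation $r$, an action on a fixed number of control points, and invariance under the two loop closures, which change the number of control points by one. The key device is to realise a loop closure as a rotation of a path carrying one extra degenerate (zero-length) segment; care is needed to ensure that the enlarged path still has at least $d+3$ control points, so that the hypothesis coming from \Cref{prop:convex autos even} applies, and to check that the preliminary subdivision changes neither the signature nor the loop closures. Once this correspondence is established, the case split on the parity of $\frac d2$ is immediate from which generators survive in $\convperms_n^d$: both $r$ and $s$ when $\frac d2$ is even, giving $\loopclosureinv^d \cap \timerevinv^d$, and only $r$ when $\frac d2$ is odd, giving $\loopclosureinv^d$.
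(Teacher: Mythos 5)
Your proof follows the same route as the paper's: decompose $\convperms^d_n$ for $n \geq d+3$ into the generators $r$ (rotation) and $s$ (order reversal) via \Cref{prop:convex autos even}, identify simultaneous $s$-invariance with $\timerevinv^d$ exactly as in \Cref{prop:inv geq3 odd}, and relate simultaneous $r$-invariance to $\loopclosureinv^d$ by observing that rotating a loop closure produces a path with one degenerate segment. Your argument that every loop closure invariant is $r$-invariant (the right closure of $X$ is literally the left closure of $r.X$) is a nice self-contained replacement for the paper's citation of \cite[Proposition~4.3]{loopcl24}.

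The gap is at the very last step of the loop-closure half: from the identities $\langle S(X), w\rangle = \langle S(X \sqcup \{X(1)\to X(0)\}), w\rangle = \langle S(\{X(1)\to X(0)\}\sqcup X), w\rangle$ holding for all \emph{piecewise linear} $X$, you conclude $w \in \loopclosureinv^d$ ``exactly as in the time-reversal case''. The two cases are not parallel. For time reversal, the target $w = \antipode w$ is an identity inside the tensor algebra, and \Cref{thm:chenchow} reaches it because time reversal has an explicit adjoint: $\langle S(X^{-1}), w\rangle = \langle S(X), \antipode w\rangle$ turns your hypothesis into ``$\langle S(X), w - \antipode w\rangle = 0$ for all piecewise linear $X$'', a comparison of two linear forms on the signature of a \emph{single} path. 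For loop closures, $\loopclosureinv^d$ is defined by identities quantified over \emph{all} paths of the paper's class (piecewise $C^1$), and your hypothesis compares signatures of two \emph{different} paths, so Chen--Chow cannot be applied to it as it stands. What is needed is that the left and right closures admit adjoints, i.e.\ linear maps $\phi^R, \phi^L$ on $\R\langle\texttt{1},\dots,\texttt{d}\rangle$ with $\langle S(X \sqcup \{X(1)\to X(0)\}), w\rangle = \langle S(X), \phi^R(w)\rangle$ for every path $X$, and similarly for $\phi^L$; then your piecewise linear identities together with \Cref{thm:chenchow} give $w = \phi^R(w) = \phi^L(w)$, whence the defining identities hold for arbitrary paths. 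This is precisely what the paper imports from \cite[Lemma~4.7]{loopcl24}. (An approximation argument, extending the identities from piecewise linear to piecewise $C^1$ paths by continuity of truncated signatures under piecewise linear interpolation, would also do, but some such ingredient must be supplied; it is not automatic.) Everything else---the subdivision bookkeeping, the degenerate-segment computations, and the final case split on the parity of $d/2$---is correct and matches the paper.
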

\begin{proof}
    If $\frac{d}{2}$ is even then $\convperms^d_n$ is the group $\mb D_n$ by \Cref{prop:convex autos even}. It is spanned by the order-reversing permutation $s_n$ and the rotation $r_n$. As observed in (the proof of) \Cref{prop:inv geq3 odd}, invariants under $s_n$ for all $n$ simultaneously are precisely the elements of $\timerevinv$. On the other hand, invariants under all $r_n$ for all $n$ simultaneously are the elements of $\loopclosureinv$, see \cite{loopcl24}. 
    Indeed, for $w \in \ourfeatures^d_{\geq d+3}$ and the right closure $\bar X^R$ of the path $X$ (which is a piecewise linear path with $n+1$ control points) we must have $\langle S(r_{n+1}(\bar X^R)), w \rangle = \langle S( \bar X), w \rangle$ for the rotation $r_{n+1} \in \mb Z/(n+1)$. But $\langle S(r_{n+1}(\bar X^R)), w \rangle  = \langle S(X), w \rangle $ by reparametrisation invariance of iterated integrals. Similarly, for the left closure $\bar X^L$ of the path $X$ we must have $\langle S( r^{-1}_{n+1}(\bar X^L)), w \rangle = \langle S(\bar X), w \rangle$. But again, $\langle S(r^{-1}_{n+1}(\bar X^L)), w \rangle  = \langle S(X), w \rangle$. Using \Cref{thm:chenchow} and that both left- and right-closure admit an adjoint \cite[Lemma 4.7]{loopcl24} we see that $w \in \loopclosureinv$. The reverse inclusion is immediate from \cite[Proposition 4.3]{loopcl24}. \par
    In the case that $\frac{d}{2}$ is odd we have that $\convperms^d_n = \mb Z/n$ is just generated by $r_n$ as shown in \Cref{prop:convex autos even}. Thus, both statements follow.
\end{proof}

\begin{theorem}\label{thm:abundant invariants}
 For any $d$,
  $\ourfeatures_{\geq d+3}^d\cap\mathcal{I}(\pwlinear^d_{d+2})$ (and in particular $\ourfeatures^d$) contains infinitely many algebraically independent elements (with respect to the shuffle product) and is thus in particular infinitely generated as a (shuffle) subalgebra of $\R\langle\word{1},\dots,\word{d}\rangle$.
\end{theorem}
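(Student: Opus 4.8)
The plan is to exhibit an explicit infinite family inside the smaller space $\loopclosureinv^d\cap\timerevinv^d\cap\mc I(\pwlinear^d_{d+2})$, which by \Cref{prop:inv geq3 odd} and \Cref{prop:inv_loopclosure_timerev} is contained in $\ourfeatures^d_{\geq d+3}$ for every parity of $d$, and then to establish algebraic independence. For odd $d$ the cleanest witnesses are the concatenation powers $\signedvolume_d^{\bullet k}$, generalizing the paper's $\signedvolume_3^{\bullet 2}$. Two of the three required properties are immediate. First, $\antipode\signedvolume_d^{\bullet k}=(\antipode\signedvolume_d)^{\bullet k}=(\pm 1)^k\signedvolume_d^{\bullet k}$, so for even $k$ the element lies in $\timerevinv^d$. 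Second, since
$$\langle S(X),\signedvolume_d^{\bullet k}\rangle=\int_{\Delta_{kd}}\prod_{j=1}^k\det\!\big(X'(t_{(j-1)d+1}),\dots,X'(t_{jd})\big)\,dt,$$
each determinant block needs $d$ distinct linear segments while consecutive blocks can share at most one, so the integral vanishes unless $X$ has at least $k(d-1)+1$ segments; hence $\signedvolume_d^{\bullet k}\in\mc I(\pwlinear^d_{d+2})$ for all sufficiently large $k$.

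The even-dimensional case is the first genuine obstacle. There, loop-closure invariance is also required, and a short computation (e.g. a triangle in the plane) shows that $\signedvolume_d^{\bullet k}$ is \emph{not} loop-closure invariant for $k\geq 2$: closing the path appends a segment of constant velocity; blocks using that segment twice vanish, but the single-use contribution cancels only for $k=1$, where the symmetrization identity $\sum_{j=1}^d(-1)^{d-j}=0$ (valid for even $d$) applies, leaving an uncancelled boundary term when $k\geq 2$. I would therefore replace the concatenation powers in even dimension by genuine loop-closure invariants drawn from the iterated-area (``areas of areas'') calculus of \cite{areasofareas,loopcl24}: every pairwise signed area $\word{ij}-\word{ji}$ is loop-closure invariant in any $\R^d$, iterating the area construction yields infinitely many algebraically independent loop-closure invariants, high iterates lie in $\mc I(\pwlinear^d_{d+2})$, and replacing any such $u$ by $u+\antipode u$ (which preserves both loop-closure invariance and the ideal, since time reversal maps $(d+2)$-point paths to $(d+2)$-point paths) places us in $\timerevinv^d$ as well.

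For algebraic independence I would first reduce to an analytic statement. By Ree's identity a shuffle polynomial relation $P$ among elements $g_1,\dots,g_r$ is carried by $\langle S(X),\cdot\rangle$ to the ordinary relation $P\big(\langle S(X),g_1\rangle,\dots,\langle S(X),g_r\rangle\big)$, so by \Cref{thm:chenchow} the $g_i$ are algebraically independent in the shuffle algebra if and only if the functions $X\mapsto\langle S(X),g_i\rangle$ on piecewise linear paths are. I would then evaluate on a concrete family: fix a loop $\gamma$ whose signature vanishes in degrees $1,\dots,d-1$ and satisfies $\langle S(\gamma),\signedvolume_d\rangle=v\neq 0$ (such higher-order loops exist), and set $X(\epsilon)=(\epsilon_1\gamma)\sqcup\cdots\sqcup(\epsilon_N\gamma)$. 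Because each scaled loop can absorb only a chunk of length $0$ or $\geq d$, at most $k$ of them contribute to $\langle S(X),\signedvolume_d^{\bullet k}\rangle$, and since the unique way to tile $\signedvolume_d^{\bullet k}$ by $k$ consecutive length-$d$ chunks is at the block boundaries, the coefficient of $\prod_{i\in T}\epsilon_i^d$ over a $k$-subset $T$ equals $v^k$. Thus the top ``support'' part of $\langle S(X),\signedvolume_d^{\bullet k}\rangle$ is exactly $v^k\,e_k(\epsilon_1^d,\dots,\epsilon_N^d)$. Filtering $\R[\epsilon_1,\dots,\epsilon_N]$ by the number of distinct variables occurring (a multiplicative filtration, $F^p\cdot F^q\subseteq F^{p+q}$) and passing to the associated graded sends each $\signedvolume_d^{\bullet k}$ to $v^k e_k$; as $e_1,\dots,e_K$ are algebraically independent for $N\geq K$ and distinct partitions index linearly independent augmented monomial symmetric functions, no nontrivial relation can survive. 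Letting $N,K\to\infty$ produces infinitely many algebraically independent elements.

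The main obstacle I anticipate is this last step: the clean extraction of the elementary-symmetric leading term and the verification that the support-size filtration is multiplicative enough for the associated-graded argument to close. A close second is supplying, in even dimension, loop-closure invariants that simultaneously lie in $\mc I(\pwlinear^d_{d+2})$ and survive the symmetrization $u\mapsto u+\antipode u$, for which the iterated-area machinery of \cite{areasofareas,loopcl24} appears essential; adapting the scaled-loop evaluation above to these invariants, so that the same elementary-symmetric mechanism governs their independence, is the technical heart of the even case.
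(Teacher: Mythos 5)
The decisive gap is in your independence step, and it affects every odd $d\geq 3$ (exactly the odd cases that matter): the loops you evaluate on do not exist. If $\gamma$ has signature vanishing in degrees $1,\dots,d-1$, then its degree-$d$ signature component equals the degree-$d$ component of its log-signature and is therefore a Lie element; but for $d\geq 3$ the element $\signedvolume_d$ is orthogonal to \emph{every} Lie element of degree $d$, because the sign representation does not occur in the multilinear degree-$d$ part of the free Lie algebra once $d\geq 3$ (equivalently, for $d\geq 3$ the signed volume is a linear combination of nontrivial shuffle products). Hence $v=\langle S(\gamma),\signedvolume_d\rangle=0$ for every such higher-order loop, your leading term $v^k e_k(\epsilon_1^d,\dots,\epsilon_N^d)$ vanishes identically, and the associated-graded argument yields nothing. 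Nor can you drop the higher-order vanishing: for an ordinary loop only length-$1$ chunks die in Chen's identity, so chunks of lengths $2,\dots,d-1$ (pairings with the area and higher components of $\gamma$) contribute, monomials of $\langle S(X(\epsilon)),\signedvolume_d^{\bullet k}\rangle$ then involve up to $\lfloor kd/2\rfloor>k$ distinct variables, and the top layer of your support filtration is no longer $v^ke_k$. (Your other two verifications for odd $d$ --- that $\signedvolume_d^{\bullet k}$ with $k$ even lies in $\timerevinv^d$, and in $\mc I(\pwlinear^d_{d+2})$ for $k\geq 2$, $d\geq 3$ --- are correct and match the paper's own example $\signedvolume_3^{\bullet 2}$.)

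The even-dimensional half rests on claims that are unsupported or false. Iterated areas are not loop-closure invariant in general: by \cite{areasofareas} areas-of-areas generate the entire shuffle algebra, which is certainly not contained in $\loopclosureinv^d$; concretely, in $d=2$ any loop-closure invariant of degree $3$ would have to be a shuffle polynomial in the signed area (the paper's Outlook notes the first invariant independent of area has degree $6$), hence zero, while nonzero degree-$3$ iterated areas exist. Likewise, ``high iterates lie in $\mc I(\pwlinear^d_{d+2})$'' comes with no mechanism --- your determinant-block argument applies to concatenation products of volumes, not to iterated areas --- and \cite{loopcl24} provides only the abstract existence of an infinite algebraically independent subset of $\loopclosureinv^d$, not that its members are iterated areas, lie in the ideal, or stay independent after $u\mapsto u+\antipode u$. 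Note that the paper's proof is structured precisely to avoid all of these obstacles: it never exhibits explicit elements. It first shows that $\ourfeatures^d_{\geq d+3}$ (which equals $\timerevinv^d$, $\loopclosureinv^d$, their intersection, or the whole algebra, by \Cref{prop:inv geq3 odd} and \Cref{prop:inv_loopclosure_timerev}) has infinite transcendence degree, using the map $w\mapsto w-\antipode w$ together with the cited result of \cite{loopcl24}; it then gets inside the ideal by pure transcendence-degree counting: the quotient $\R\langle\word{1},\dots,\word{d}\rangle/\mc I(\pwlinear^d_{d+2})$ embeds into $\mb R[x_1,\dots,x_{d+2}]$, which has finite transcendence degree, so the kernel of the composite $\mb R[s_1,s_2,\dots]\hookrightarrow\ourfeatures^d_{\geq d+3}\to\R\langle\word{1},\dots,\word{d}\rangle/\mc I(\pwlinear^d_{d+2})$ must already contain infinitely many algebraically independent elements. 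Your reduction of shuffle-algebraic independence to functional independence on piecewise linear paths via Ree's identity and \Cref{thm:chenchow} is correct and worth keeping, but as written both halves of your construction have an unbridged hole, so the proposal does not establish the theorem.
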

\begin{proof}  
    If $d$ is odd then we have $\ourfeatures^d_{\geq d+3} = \timerevinv^d$ or $\ourfeatures^d = \mb R\langle \texttt 1, \dots, d\rangle$. If $d$ is even then $\ourfeatures^d_{\geq d+3} = \loopclosureinv^d \cap \timerevinv^d$ or $\ourfeatures^d_{\geq d+3} = \loopclosureinv^d$. We claim that all of these algebras contain infinitely many algebraically independent elements. This is true for $\mb R\langle \texttt 1, \dots, d\rangle$ as it is freely generated by the Lyndon words, see \cite[Theorem 6.1]{reutenauer1993free}. Then it is also true for $\timerevinv^d$: It is the kernel of the map (of vector spaces) $\psi: \mb R\langle \texttt 1, \dots, \texttt d\rangle \to \mb R\langle \texttt 1, \dots, \texttt d\rangle ,w\mapsto w - \mc A w$ and if it does not contain infinitely many algebraically independent elements, then there is a finite set $S$ of Lyndon words such that each element is already algebraic over $\mb R[S]$, thus contained in it. It follows that the image of $\psi$ would have to contain infinitely many algebraically independent $a_1,a_2,\dots$ but then the elements $a_1^{\shuffle 2}, a_2^{\shuffle 2}, \dots \in \timerevinv$ are still algebraically independent, yielding a contradiction.
    
In \cite{loopcl24} it is shown that $\loopclosureinv^d$ contains an infinite algebraically independent subset and by a similar argument as above
    it follows that the intersection $\loopclosureinv^d \cap \timerevinv^d$ also does, using that $w-\mc Aw$ is an element of $\loopclosureinv^d$ for every $w \in \loopclosureinv^d$ since $\mc Aw$ is a loop closure invariant if $w$ is (which follows from the definition).

    So we have shown that $\ourfeatures^d_{\geq d+3}$ contains infinitely many algebraically independent elements for any $d$. In particular, we can consider a composition
   \[ \begin{tikzcd}[nodes={inner sep=1pt}]
        \mb R[s_1,s_2,\dots] \rar[hookrightarrow] & \ourfeatures^d_{\geq d+3} \rar & \R\langle\texttt 1, \dots,\texttt d\rangle \rar & \R\langle\texttt 1, \dots,\texttt d\rangle / \mc I(\pwlinear^d_{d+2})
    \end{tikzcd}\]
    where the first map is injective. The kernel of this composition must contain infinitely many algebraically independent elements as the quotient on the right is isomorphic to a subring of $\mb R[x_1,\dots,x_{d+2}]$ via \eqref{eq:sign map}. Indeed, otherwise there is again some $N$ such that any element of the kernel is already algebraic over $\mb R[s_1,\dots,s_N]$ and we get an injection $\mb R[s_{N+1},s_{N+2},\dots] \to \mb R[x_1,\dots,x_{d+2}]$ which is absurd. The infinitely many algebraically independent elements are still algebraically independent in the larger ring $\ourfeatures^d_{\geq d+3}$ and by construction contained in $\mc I(\pwlinear^d_{d+2})$, proving the claim.
    
\end{proof}
\section{Outlook}

\paragraph{Computing volume invariants for even $d$}
In dimension $2$, we simply have $\ourfeatures^2=\loopclosureinv^2$.
A lowest-degree example of a loop closure invariant for two-dimensional paths that is independent of signed area is the following:
\begin{small}
\begin{align*}
 & \hphantom{\hspace{1.1em}} 3\cdot\texttt{111222}+5\cdot\texttt{112122}+3\cdot\texttt{112212}-3\cdot\texttt{112221}+3\cdot\texttt{121122}+\texttt{121212}\\
&-5\cdot\texttt{121221}+\texttt{122112}-5\cdot\texttt{122121}-3\cdot\texttt{122211}-3\cdot\texttt{211122}-5\cdot\texttt{211212}\\
&+\texttt{211221}-5\cdot\texttt{212112}
+\texttt{212121}+3\cdot\texttt{212211}-3\cdot\texttt{221112}+3\cdot\texttt{221121}\\
&+5\cdot\texttt{221211}+3\cdot\texttt{222111}
\end{align*}
\end{small}

However, starting from four dimensions the even case gets vastly more involved.
The lowest-degree generators of $\mathcal{I}(\pwlinear_6^4)$ are the following $8$ on level~$7$,
\begin{align*}
  &\signedvolume_4\bullet\signedvolume_3(\texttt{1},\texttt{2},\texttt{3}),\quad\!\!\!\signedvolume_4\bullet\signedvolume_3(\texttt{1},\texttt{2},\texttt{4}),\quad\!\!\!\signedvolume_4\bullet\signedvolume_3(\texttt{1},\texttt{3},\texttt{4}),\quad\!\!\!\signedvolume_4\bullet\signedvolume_3(\texttt{2},\texttt{3},\texttt{4}),\\
  &\signedvolume_3(\texttt{1},\texttt{2},\texttt{3})\bullet\signedvolume_4,\quad\!\!\!\signedvolume_3(\texttt{1},\texttt{2},\texttt{4})\bullet\signedvolume_4,\quad\!\!\!\signedvolume_3(\texttt{1},\texttt{3},\texttt{4})\bullet\signedvolume_4,\quad\!\!\! \signedvolume_3(\texttt{2},\texttt{3},\texttt{4})\bullet\signedvolume_4
\end{align*}
  where
  \begin{align*}
\signedvolume_4 &:=\signedvolume_4(\texttt{1},\texttt{2},\texttt{3},\texttt{4})\\
 &:=\texttt{1234}-\texttt{1243}-\texttt{1324}+\texttt{1342}+\texttt{1423}-\texttt{1432}-\texttt{2134}+\texttt{2143}\\
 &\hphantom{:=}+\texttt{2314}-\texttt{2341}-\texttt{2413}+\texttt{2431}+\texttt{3124}-\texttt{3142}-\texttt{3214}+\texttt{3241}\\
 &\hphantom{:=}+\texttt{3412}-\texttt{3421}-\texttt{4123}+\texttt{4132}+\texttt{4213}-\texttt{4231}-\texttt{4312}+\texttt{4321}
 \end{align*}
 
 is four dimensional signed volume and $$\signedvolume_3(\texttt{i},\texttt{j},\texttt{k})=\texttt{ijk}+\texttt{jki}+\texttt{kij}-\texttt{jik}-\texttt{ikj}-\texttt{kji}.$$

 \noindent No linear combination of these eight is a loop-closure invariant.\par
 Even though we know that $\ourfeatures^4_{\geq d+3}\cap\mathcal{I}(\pwlinear^4_6)$ is an infinitely generated subring,
 we need to compute very far to find the first generator.

\paragraph{The induced equivalence relation}

Recall that we can view $\ourfeatures^d$ as features on cyclic polytopes with $n\geq d+1$ vertices. If we consider the equivalence relation
$$(x_1, \dots, x_n) \sim (y_1, \dots, y_m) :\Leftrightarrow \forall \ f \in \ourfeatures^d: f(x_1, \dots, x_n) = f(y_1, \dots, y_m)$$
then we have $(x_1,...,x_n) \sim (x_{\sigma(1)},\dots,x_{\sigma(n)})$ for any $\sigma \in C^n_d$. The properties of iterated integrals imply that $(x_1,\dots,x_n) \sim (x_1 + c, \dots, x_n + c)$ for any $c \in \mb R^d$ and that $(x_1,\dots,x_n) \sim (x_1,\dots,\hat{x_i},\dots,x_n)$ ($x_i$ is omitted in the second tuple) whenever $x_i$ is a convex combination of $x_{i-1}$ and $x_{i+1}$.\par
However, we do not expect these three types of relations to generate $\sim$: For example, if \Cref{conj:only vol for d+2} holds true, then any two $d$-polytopes with $d+2$ vertices and the same volume are equivalent under $\sim$.
\begin{myproblem}
   How can the equivalence relation $\sim$ be described geometrically or combinatorial? Cf.\ \cite[Section~5]{loopcl24}, \cite[Conjecture~7.2]{diehl2019invariants} and \cite{diehllyonsnipreiss}.
\end{myproblem}

\paragraph{Specializing to $O$ and $SL$-invariants}

For $O_d$, the orthogonal group,
we may reduce to invariants on demand (cf.\ \cite{diehllyonsnipreiss}),
for example through a projection 
$$R:\,p\mapsto \int_{O_d}p(A x_1,\dots,A x_n) d\mu(A),$$
where $\mu$ is the Haar measure of $O_d$ with $\mu(O_d)=1$. 
$R$ preserves signature polynomials,
and is an example of a so-called Reynold's operator.
Now for $SL_d$, there is no finite Haar measure as it is a non-compact group.
However, we may still compute the intersection of the subrings of $\ourfeatures^d$ and the $SL_d$ invariants.
This yields functions on the positive Grassmannian,
as the latter can be represented by positive matrices modulo $SL_d$ action from the left. 
See for example \cite{derksenkemper} for the notions of Reynold's operator and Haar measure.

\paragraph{Invariants for other groups}

Instead of considering $G_n=C_n^d$, there are of course other interesting possibilities.\par
If we were to consider the maximal choice $G_n=S_n$, then $\ourfeatures^d(G)$ would be the zero subring. Indeed, take any piecewise linear path $P$, with $n$ vertices. Then one can double each vertex except the last to obtain a path with $2n-1$ vertices but the same signature. Permuting the order of the vertices allows then to obtain a tree-like path (i.e.\ a path with vanishing signature). Thus, any (simultaneous) invariant for the $S_n$-action must evaluate to $0$ under the signature. By (weak) Chen-Chow, Theorem~\ref{thm:chenchow}, this implies that the invariant itself must be $0$.

For $G_n=\mathbb{Z}/n$, we exactly get $\ourfeatures^d(G)=\loopclosureinv^d$,
and for $G_n=\mathbb{D}_n$, we have $\ourfeatures^d(G)=\loopclosureinv^d\cap\timerevinv^d$.

\paragraph{Acknowledgements}
The authors thank Carlos Améndola, Joscha Diehl, Jeremy Reizenstein, Leonard Schmitz, Bernd Sturmfels and Nikolas Tapia for helpful discussions and suggestions, and Shelby Cox and Gabriele Dian for reviewing a preliminary version of this paper. The authors furthermore thank Bernd Sturmfels for his support in restructuring the article.
The authors acknowledge support from DFG CRC/TRR 388 ``Rough Analysis, Stochastic Dynamics and Related Fields'', Project A04.
R.P.\ was affiliated with MPI MiS Leipzig until June 2024 and thanks MPI MiS for a guest status until December 2024 that facilitated the collaboration with F.L.
\begin{small}
\def\cprime{$'$}

\end{small}


\begin{thebibliography}{10}

\bibitem{AFS18}
Carlos Am{\'e}ndola, Peter Friz, and Bernd Sturmfels.
\newblock {Varieties of Signature Tensors}.
\newblock {\em Forum of Mathematics, Sigma}, 7:e10, 2019.

\bibitem{amendolaleemeroni23}
Carlos Améndola, Darrick Lee, and Chiara Meroni.
\newblock Convex hulls of curves: Volumes and signatures.
\newblock \href{https://arxiv.org/abs/2301.09405}{\tt arXiv:2301.09405}, 2023.

\bibitem{arkani2014amplituhedron}
Nima Arkani-Hamed and Jaroslav Trnka.
\newblock The amplituhedron.
\newblock {\em Journal of High Energy Physics}, 2014(10):1--33, 2014.

\bibitem{BGLY16}
Horatio Boedihardjo, Xi~Geng, Terry Lyons, and Danyu Yang.
\newblock {The signature of a rough path: Uniqueness}.
\newblock {\em Advances in Mathematics}, 293:720--737, 2016.

\bibitem{bib:Che1954}
Kuo-Tsai Chen.
\newblock {Iterated Integrals and Exponential Homomorphisms}.
\newblock {\em Proceedings of the London Mathematical Society}, s3-4(1):502--512, 1954.

\bibitem{Chen1958}
Kuo-Tsai Chen.
\newblock {Integration of Paths -- A Faithful Representation of Paths by Noncommutative Formal Power Series}.
\newblock {\em Transactions of the American Mathematical Society}, 89(2):395--407, 1958.

\bibitem{colmenarejopreiss20}
Laura Colmenarejo and Rosa Prei{\ss}.
\newblock {Signatures of paths transformed by polynomial maps}.
\newblock {\em Beitr{\"a}ge zur Algebra und Geometrie/Contributions to Algebra and Geometry}, 61(4):695--717, 2020.

\bibitem{derksenkemper}
Harm Derksen and Gregor Kemper.
\newblock{\em Computational Invariant theory.}
\newblock Encyclopedia of Mathematical Sciences 130.
\newblock Springer, 2nd edition, 2015.
\pagebreak
\bibitem{diehleftapia2020acta}
Joscha Diehl, Kurusch {Ebrahimi-Fard}, and Nikolas Tapia.
\newblock {Time-Warping Invariants of Multidimensional Time Series}.
\newblock {\em Acta Applicandae Mathematicae}, 170(1):265--290, 2020.

\bibitem{diehllyonsnipreiss}
Joscha Diehl, Terry Lyons, Hao Ni, and Rosa Preiß.
\newblock Signature invariants characterize orbits of paths under compact matrix group action.
\newblock Work in progress.

\bibitem{areasofareas}
Joscha Diehl, Terry Lyons, Rosa Prei{\ss}, and Jeremy Reizenstein.
\newblock {Areas of areas generate the shuffle algebra}. \href{https://arxiv.org/abs/2002.02338v2}{{\tt arXiv:2002.02338v2}}, 2021.

\bibitem{diehl2019invariants}
Joscha Diehl and Jeremy Reizenstein.
\newblock Invariants of multidimensional time series based on their iterated-integral signature.
\newblock {\em Acta Applicandae Mathematicae}, 164(1):83--122, 2019.

\bibitem{fedorchukpak}
Maksym Fedorchuk and Igor Pak.
\newblock Rigidity and polynomial invariants of convex polytopes.
\newblock {\em Duke Mathematical Journal}, 129(2):371--404, 2005.



\bibitem{Gale1963NeighborlyAC}
David Gale.
\newblock Neighborly and cyclic polytopes.
\newblock In Victor~L.\ Klee, editor, {\em Convexity}, volume~7 of {\em
  Proceedings of Symposia in Pure Mathematics}, pages 225--232. 1963.



\bibitem{KaibelAutomorphismGO}
Volker Kaibel and Arnold Wassmer.
\newblock Automorphism groups of cyclic polytopes, 2003.
\newblock \href{https://cloud.ovgu.de/s/yAoQJRR35QiWF6M}{https://cloud.ovgu.de/s/yAoQJRR35QiWF6M}, link checked 17 Jan 2025.

 \bibitem{ANudelman_1975}
 Adolf Abramovich Nudel'man.
 \newblock Isoperimetric problems for the convex hulls of polygonal lines and curves in multidimensional spaces.
 \newblock {\em Mathematics of the USSR-Sbornik}, 25(2):276, 1975.

\bibitem{postnikov2006}
Alexander Postnikov.
\newblock Total positivity, Grassmannians, and networks.
\newblock \href{https://arxiv.org/abs/math/0609764}{\tt arXiv:math/0609764}, 2006.

\bibitem{preiss2024algebraic}
Rosa Prei{\ss}.
\newblock An algebraic geometry of paths via the iterated-integrals signature.
\newblock \href{https://arxiv.org/abs/2311.17886v2}{\tt arXiv:2311.17886v2}, 2024.

\bibitem{loopcl24}
Rosa Prei{\ss}, Jeremy Reizenstein, and Joscha Diehl.
\newblock Conjugation, loop and closure invariants of the iterated-integrals signature.
\newblock \href{https://arxiv.org/abs/2412.19670}{\tt arXiv:2412.19670}, 2024.

\bibitem{Ree58}
Rimhak Ree.
\newblock {Lie Elements and an Algebra Associated With Shuffles}.
\newblock {\em Annals of Mathematics Second Series}, 68(2):210--220, 1958.

\bibitem{reutenauer1993free}
Christophe Reutenauer.
\newblock {\em Free Lie Algebras}, volume~7 of {\em London Mathematical Society
  Monographs, New Series}.
\newblock Clarendon Press, Oxford, 1993.





\end{thebibliography}
\end{document}